\documentclass[12pt]{amsart}
\usepackage{a4wide,enumerate,xcolor,graphicx}
\usepackage{amsmath}
\usepackage{float}
\allowdisplaybreaks

\let\pa\partial
\let\na\nabla
\let\eps\varepsilon
\newcommand{\N}{{\mathbb N}}
\newcommand{\R}{{\mathbb R}}

\newcommand{\diver}{\operatorname{div}}

\newcommand{\T}{{\mathbb T}}

\newtheorem{theorem}{Theorem}
\newtheorem{lemma}[theorem]{Lemma}

\newtheorem{remark}[theorem]{Remark}

%%%%%%%%%%%%%%%%%%%%%%%%%%%%%%%%%%%%%%%%%%%%%%%%%%%%%%%%%%%%%%%%%%%%%%%%%%

\begin{document}

\title[Structure-preserving semi-convex-splitting scheme]{
Structure-preserving semi-convex-splitting \\ 
numerical scheme for a Cahn--Hilliard \\
cross-diffusion system in lymphangiogenesis}

\author[A. J\"ungel]{Ansgar J\"ungel}
\address{Institute of Analysis and Scientific Computing, Technische Universit\"at Wien, Wiedner Hauptstra\ss e 8--10, 1040 Wien, Austria}
\email{juengel@tuwien.ac.at}

\author[B. Wang]{Boyi Wang}
\address{Institute of Analysis and Scientific Computing, Technische Universit\"at Wien, Wiedner Hauptstra\ss e 8--10, 1040 Wien, Austria}
\email{boyi.wang@tuwien.ac.at}

\date{\today}

\thanks{The first author acknowledges partial support from
the Austrian Science Fund (FWF), grants P33010 and F65.
This work has received funding from the European
Research Council (ERC) under the European Union's Horizon 2020 research and
innovation programme, ERC Advanced Grant no.~101018153.}

\begin{abstract}
A fully discrete semi-convex-splitting finite-element scheme with stabilization for a Cahn--Hilliard cross-diffusion system is analyzed. The system consists of parabolic fourth-order equations for the volume fraction of the fiber phase and solute concentration, modeling pre-patterning of lymphatic vessel morphology. The existence of discrete solutions is proved, and it is shown that the numerical scheme is energy stable up to stabilization, conserves the solute mass, and preserves the lower and upper bounds of the fiber phase fraction. Numerical experiments in two space dimensions using FreeFEM illustrate the phase segregation and pattern formation. 
\end{abstract}

% \paragraph{Keywords:}
\keywords{Cahn--Hilliard equation, cross-diffusion systems, free energy, lymphangiogenesis, convex splitting, finite-element method, energy stability, existence of discrete solutions.}

% \paragraph{AMS classification:}
\subjclass[2000]{65M12, 65M60, 92C37.}

\maketitle

%%%%%%%%%%%%%%%%%%%%%%%%%%%%%%%%%%%%%%%%%%%%%%%%%%%%%%%%%%%%%%%%%%%%

\section{Introduction}

In this paper, we suggest a thermodynamically consistent cross-diffusion system for lymphangiogenesis, based on the model of \cite{RoFo08}. Lymphangiogenesis is defined as the formation of new lymphatic vessels by lymphatic endothelial cells sprouting from existing vessels \cite{TaAl10}. Still, it may also occur in a different way, for instance, by migration of lymphatic endothelial cells in the direction of interstitial flow \cite{RBS06}. Our model describes the pre-patterning of lymphatic vessel morphology in collagen gels. The objective is the design of a structure-preserving fully discrete finite-element scheme and the existence of discrete solutions.

\subsection{Model equations}

The dynamics are assumed to be given by the collagen volume fraction and the solute concentration (protons, enzymes, nutrients) in a collagen implant, as experimentally realized in \cite{BoSw03}.
Assuming that the collagen implant consists of two phases, the fiber phase with (collagen) volume fraction $\phi$ and the fluid phase with volume fraction $1-\phi$, and that the solute is present in the fluid phase of the implant, the equations for the fiber phase volume fraction $\phi(x,t)$ and the solute or nutrient concentration $c(x,t)$ are given, according to \cite{RoFo08}, by
\begin{align}
  \pa_t\phi &= \diver\big(m(\phi)(\na\mu-c\na h_c(\phi,c))\big), 
  \label{1.phi} \\
  \pa_t c &= -\diver\big(cm(\phi)(\na\mu-c\na h_c(\phi,c))\big) 
  + \diver(g(c) \na h_c(\phi,c)), \label{1.c} \\
  \mu &= -\eps\Delta\phi + \eps^{-1}f(\phi) + h_\phi(\phi,c)\quad\mbox{in }{\T^d},\ t>0.
  \label{1.mu}
\end{align}
Here, $\T^d$ ($d\ge 1$) is the $d$-dimensional torus,
the degenerate mobility is given by
$$
  m(\phi)=\phi^2(1-\phi)^2,
$$
the diffusion coefficient $g(c)$ is nonnegative and satisfies $g(0)=0$ (which means that the diffusion in \eqref{1.c} is degenerate),
and
$$
  f(\phi)=\ln\frac{\phi}{1-\phi} + \frac{\theta_0}{2}(1-2\phi), \quad h(\phi,c)=\frac{c^2}{2} + c(1-\phi)
$$
are the derivative of the interaction energy and the nutrient energy \cite[(2.63)]{GKT22}, respectively. Furthermore, $h_c=\pa h/\pa c$, $h_\phi=\pa h/\pa\phi$ are partial derivatives, $\mu$ is called the chemical potential associated to the fiber phase, and $\theta_0>0$ and $\eps>0$ are some parameters. We refer to Section \ref{sec.model} for details on the model.
We impose the initial conditions
\begin{align*}%\label{1.ic}
  \phi(\cdot,0) = \phi^0,\quad c(\cdot,0)=c^0 \quad\mbox{in }{\T^d}.
\end{align*}
We may also consider no-flux boundary conditions for a bounded domain in $\R^d$, but we assume periodic boundary conditions for the sake of simplicity.

Model \eqref{1.phi}--\eqref{1.mu} is a fourth-order cross-diffusion system with the following features. If the chemical potential $\mu$ is constant, the diffusion matrix associated to the variables $(\phi,c)$ has a vanishing eigenvalue. This issue indicates that it is more convenient to work with thermodynamic variables, which makes the diffusion matrix positive (semi-) definite. Furthermore, if the nutrient energy is constant, we obtain the Cahn--Hilliard equation for phase separation with a nonconvex energy. 

The key of our numerical analysis is the observation that \eqref{1.phi}-\eqref{1.mu} possesses the free energy
\begin{align}
  E(\phi,c) = \int_{\T^d}\bigg(\frac{\eps}{2}|\na\phi|^2 + \frac{1}{\eps}F(\phi) + h(\phi,c)\bigg)dx, \label{1.E}
\end{align}
consisting of the correlation, interaction, and nutrient energies. The interaction energy density
\begin{equation*}
  F(\phi) = \phi\ln\phi + (1-\phi)\ln(1-\phi) - \frac{\theta_0}{2}\phi(\phi-1)
\end{equation*}
is the difference of two convex functions, $\phi\mapsto\phi\ln\phi + (1-\phi)\ln(1-\phi)$ and $\phi\mapsto(\theta_0/2)\phi(\phi-1)$.
The chemical potential $\mu$ is the variational derivative of $E$ with respect to $\phi$ with $f=F'$.

A computation, detailed formally in Section \ref{sec.model} and made rigorous on the discrete level in Theorem \ref{thm.ex}, shows that
\begin{equation}\label{1.ei}
  \frac{dE}{dt} + \int_{\T^d}\big(m(\phi)|\na\mu-c\na h_c(\phi,c)|^2 
  + g(c)|\na h_c(\phi,c)|^2\big)dx = 0.
\end{equation}
The energy provides bounds for $\na\phi$ and $c$ in $L^2(\T^d)$, while the energy dissipation gives bounds for $\sqrt{m(\phi)}\na\phi$ and $\sqrt{g(c)}\na c$ only under some conditions on $g(c)$. An energy structure cannot be derived for the original model of \cite{RoFo08}, where the nutrient flux is given by $D(\phi)\na c$ and not by $g(c)\na h_c=g(c)(h_{c\phi}\na\phi+h_{cc}\na c)$ as in our model. The additional gradient $\na\phi$ is necessary to compensate the cross-diffusion terms in \eqref{1.phi}. This is not surprising from a thermodynamic viewpoint, and, as explained in Section \ref{sec.model}, the form $\na h_c$ follows from thermodynamic principles.

This paper aims to design a numerical scheme that preserves the physical properties of the model, namely mass conservation, energy stability, and the bounds $0\le\phi\le 1$. This is not trivial, since the energy is nonconvex and the higher-order and cross-diffusion structure does not allow for an application of a (discrete) maximum principle to conclude the bounds for $\phi$. We overcome these issues by using a semi-convex-splitting scheme to achieve energy stability and by exploiting the singularities of $f(\phi)$ at $\phi=0$ and $\phi=1$ to prove the lower and upper bounds.

\subsection{Stabilized semi-convex-splitting scheme}

The convex-splitting scheme was originally proposed in \cite{ElSt93} and revitalized in \cite{Eyr98}. Based on the classical convex-splitting scheme, the idea of this paper is first to write the interaction and nutrient energies as the difference of two functions $F(\phi)= F_1(\phi) - F_2(\phi)$ and $h(\phi,c) = h_1(c)-h_2(\phi,c)$ respectively, where $F_1, F_2, h_1$ are convex functions and $h_2(\phi,c)$ has a special structure, given by
\begin{equation*}%\label{1.F12h12}
\begin{aligned}
  & F_1(\phi) = \phi\ln\phi + (1-\phi)\ln(1-\phi), \quad
	F_2(\phi) = \frac{\theta_0}{2}\phi(\phi-1), \\
	& h_1(c) = \frac12 c^2, \quad h_2(\phi, c) = c(\phi-1),
\end{aligned}
\end{equation*}
and second to treat $F'_1(\phi)=f_1(\phi)$, $h_{1,c}(c)$, $h_{2,\phi}(c)$ implicitly and $F'_2(\phi)=f_2(\phi)$, $h_{2,c}(\phi)$ explicitly. Typically, the time derivative is discretized by the backward Euler method, but also second-order convex-splitting schemes have been suggested in the literature; see, {e.g., \cite{DWW16,DWZZ20,LJWZ22}}. To recall the technique, we write \eqref{1.phi}--\eqref{1.mu} as the formal gradient flow
$$
  \pa_t u = \diver\big(M(u)\na\delta E(u)\big), \quad\mbox{where }
	u=\begin{pmatrix} \phi \\ c \end{pmatrix}.
$$
Here, the (symmetric, positive semidefinite) mobility matrix reads as
$$
	M(u) = \begin{pmatrix} m(\phi) & -cm(\phi) \\ -cm(\phi) & g(c)+c^2m(\phi)\end{pmatrix},
$$
and $\delta E(u)=(\mu,h_c)$ is the variational derivative of the energy with respect to $u$. We write $E(c,\phi)=E_1-E_2$, where
\begin{align*}
  E_1 = \int_{\T^d}\bigg(\frac{\eps}{2}|\na\phi|^2
  + \frac{1}{\eps}F_1(\phi) + h_1(c)\bigg)dx, \quad
  E_2 = \int_{\T^d}\bigg(\frac{1}{\eps}F_2(\phi) + h_2(\phi,c)\bigg)dx.
\end{align*}
The semidiscrete backward Euler semi-convex-splitting scheme with stabilization reads as
\begin{equation}\label{1.gf}
\begin{aligned}
  & u^{n+1}-u^n + \sigma\tau^2(\mu^{n+1}-\mu^n,0)^T 
  = \tau\diver\big(M(u^n)\na
  (\delta E_1(u^{n+1})-\delta E_2(u^{n,*})\big), \\
  & \mu^{n+1} = -\eps\Delta\phi^{n+1} 
  + \eps^{-1}\big(F_1'(\phi^{n+1})-F_2'(\phi^n)\big) 
  + h_{2\phi}(c^{n+1}),
\end{aligned}
\end{equation}
where $u^{n,*}=(\phi^n, c^{n+1})$, $\tau>0$ is the time step size, and $\sigma>0$ is a given constant. Because of the presence of $u^{n,*}$, the discretization is called a {\em semi}-convex-splitting scheme. The stabilization with parameter $\sigma>0$ is introduced to obtain an $L^2(\T^d)$ norm of $\mu^n$. The stabilization term is crucial for our finite-element analysis to deal with the degeneracy. In fact, energy inequality \eqref{1.ei} does not provide a bound for $\na\mu^n$ since $m(\phi)=0$ at $\phi=0$ and $\phi=1$. 

\subsection{Energy stability and physical bounds}

We claim that scheme \eqref{1.gf} is energy stable up to stabilization. For this, we first observe that the convexity of $E_1$ and the special structure of $E_2$ imply that
\begin{align*}
  \int_{\T^d}(u^{n+1}-u^n)\cdot\delta E_1(u^{n+1}) dx
  &\ge E_1(u^{n+1})-E_1(u^n), \\
  -\int_{\T^d}(u^{n+1}-u^n)\cdot\delta E_2(u^{n,*}) dx
  &\ge -\big(E_2(u^{n+1})-E_2(u^n)\big),
\end{align*}
where the last inequality follows from the identity
$$
  (\bar\phi-\phi)h_{2c}(\phi)+(\bar{c}-c)h_{2c}(\bar\phi)
  = h_2(\bar\phi,\bar{c}) - h_2(\phi,c).
$$
The stabilization term satisfies
\begin{align*}
  \sigma\tau^2\int_{\T^d}&(\mu^{n+1}-\mu^n,0)^T\cdot 
  (\delta E_1(u^{n+1})-\delta E_2(u^{n,*}))dx \\
  &= \sigma\tau^2\int_{\T^d}(\mu^{n+1}-\mu^n)\mu^{n+1}dx
  \ge \frac{\sigma\tau^2}2
  \big(\|\mu^{n+1}\|^2_{L^2(\T^d)}-\|\mu^{n}\|^2_{L^2(\T^d)}\big).
\end{align*}
The test function $\delta E^{n+1,n}:=\delta E_1(u^{n+1})-\delta E_2(u^{n,*})$ in the weak formulation of \eqref{1.gf} and the positive definiteness of $M(u^n)$ yield
\begin{align*}
  0 &\ge
  -\int_{\T^d}\na(\delta E^{n+1,n})^T M(u^n)\na(\delta E^{n+1,n})dx\\
  &= \int_{\T^d}\big((u^{n+1}-u^n)
  + \sigma\tau^2(\mu^{n+1}-\mu^n,0)^T\big)\cdot \delta E^{n+1,n} dx \\
  &\ge E(u^{n+1})-E(u^n)
  + \frac{\sigma\tau^2}{2}
  \big(\|\mu^{n+1}\|^2_{L^2(\T^d)}-\|\mu^{n}\|^2_{L^2(\T^d)}\big),
\end{align*}
and consequently 
$$
  E(u^{n+1}) + \frac{\sigma\tau^2}{2}\|\mu^{n+1}\|^2_{L^2(\T^d)}
  \le E(u^n) + \frac{\sigma\tau^2}{2}\|\mu^{n}\|^2_{L^2(\T^d)},
$$
This shows that the scheme is energy stable up to stabilization.

% To verify the physical bounds $0\le\phi\le 1$, we prove, inspired by \cite{KNP95}, a uniform estimate for a regularization $f_{1,\delta}(\phi_\delta)$ of $f_1(\phi)$ in $L^1(\T^d)$.
% Defining $W_\delta=\{x:|\phi_\delta(x)-1/2|>1/2-\delta\}$, the uniform bound shows that
% $$
%   \min_{\T^d} f_{1,\delta}(\phi_\delta)|W_\delta| \le \|f_{1,\delta}(\phi_\delta)\|_{L^1(\T^d)}
% 	\le C.
% $$
% The regularization is constructed in such a way that $\min_{\T^d} f_{1,\delta}(\phi_\delta)\to\infty$ as $\delta\to 0$ (which is possible because of the singularities of $f(\phi)$ at $\phi\in\{0,1\}$). This implies that $|W_\delta|\to 0$ and hence, $0\le\phi\le 1$ for the limit function $\phi$ of $\phi_\delta$.

{Inspired by \cite{BBH2000} and with the aim to verify the physical bounds $0<\phi<1$, we consider a practical semi-convex-splitting scheme based on the idea explained above, where numerical quadrature is applied. With this idea, we prove a uniformly estimate for $|f_{1,\delta}(\phi_\delta)|^2$, where $f_{1,\delta}(\phi_\delta)$ is a regularization of $f_1(\phi)$ defined only in the interval $(0,1)$. This uniform estimate yields $0<\phi<1$ for the limit function $\phi$ of $(\phi_\delta)$.}

\subsection{Main results and state of the art}

Our main results are
\begin{itemize}
\item[(i)] the derivation of system \eqref{1.phi}--\eqref{1.mu} from thermodynamic principles, different from \cite{RoFo08},
\item[(ii)] the existence of a finite-element solution to the fully discrete semi-convex-splitting scheme,
\item[(iii)] the proof of energy stability up to stabilization, and
\item[(iv)] the proof of lower and upper bounds for the fiber phase fraction.
\end{itemize}
Moreover, we present some numerical tests in two space dimensions showing the phase separation expected for Cahn--Hilliard-type equations.

The model considered in this paper contains several mathematical difficulties: a degenerate mobility $m(\phi)$, a degenerate diffusion coefficient $g(c)$, cross-diffusion terms, and fourth-order derivatives. To deal with the finite-element approximation of the cross-diffusion equations, we exploit the thermodynamic structure of the model. The finite-element approach also has some limitations. First, we cannot prove the nonnegativity of the discrete concentrations, although the continuous equation preserves this property. This issue is well-known in finite-element theory, which we discuss in Remark \ref{rem.pos}. Second, we do not fully exploit the energy dissipation to obtain gradient bounds involving the chemical potential $\mu$, but instead we require a stabilization term to obtain low-order estimates for $\mu$. Gradient bounds for $\mu$ then follow from inverse inequalities. Therefore, our estimates are not uniform in the stabilization parameter and the mesh size, which prevents a convergence analysis. 

We finish the introduction by discussing the state of the art.
The modeling of lymphangiogenesis is quite recent. A system of ODEs was presented in \cite{BPS15} and extended in \cite{BPS16} to include spatial variations, describing the dynamics observed in wound healing lymphangiogenesis. The work \cite{FrLo05} analyzed a diffusion system with haptotaxis and chemotaxis terms for tumor lymphangiogenesis. Lymphangiogenesis processes in zebrafish embryos were modeled by reaction--diffusion--convection equations in \cite{WeRo17}. The collagen pre-patterning caused by interstitial fluid flow was described by Cahn--Hilliard-type equations in \cite{RoFo08}. The optimal structure of the lymphatic capillary network was studied in \cite{RoSw11} using homogenization theory. It was found that a hexagonal network is optimal in terms of fluid drainage, which is the structure found in mouse tails and human skin. A review on lymphangiogenesis models is given in \cite[Section 4]{MaBl12}.

Numerical schemes for Cahn--Hilliard equations are usually based on convex splitting
(see, e.g., \cite{GaWa12}). Another idea, still based on the energy gradient structure of the equations, is due to \cite{Fur01} using the so-called discrete variational derivative method. More recent papers analyze second-order convex-splitting schemes; see, e.g., \cite{FuYa22,LJWZ22}. In particular, energy-stable finite-difference \cite{CWWW19}, compact finite-difference \cite{LeSh19}, and mixed finite-element discretizations \cite{DWW16} have been investigated.

Cross-diffusion systems with Cahn--Hilliard terms have been suggested to model the dynamics in biological membranes \cite{GKRR16} and for tumor growth \cite{RSS23}. In these models, the cross-diffusion is of Keller--Segel type and thus, the diffusion matrix is triangular. Fourth-order degenerate cross-diffusion systems with diagonal mobility matrix were analyzed in \cite{MaZi17}. In \cite{HJT23}, Maxwell--Stefan models for fluid mixtures with full diffusion matrix and Cahn--Hilliard-type chemical potentials were investigated. Finally, a Cahn--Hilliard cross-diffusion model arising in physical vapor deposition was analyzed in \cite{EMP21} and numerically discretized in \cite{CEMP23}.

Up to our knowledge, the mathematical study of cross-diffusion Cahn--Hilliard equations like \eqref{1.phi}--\eqref{1.mu} is new. In particular, no energy-stable schemes seem to exist for such systems. The originality of the present paper is the thermodynamic modeling and the extension of (semi-) convex-splitting schemes to the cross-diffusion context.

The paper is organized as follows. System \eqref{1.phi}--\eqref{1.mu} is formally derived from thermodynamic principles in Section \ref{sec.model}. We present the fully discrete semi-convex-splitting scheme and the main existence result in Section \ref{sec.scheme}. Section \ref{sec.ex} is concerned with the proof of the main theorem. Finally, numerical experiments in two space dimensions are given in Section \ref{sec.num}.

%%%%%%%%%%%%%%%%%%%%%%%%%%%%%%%%%%%%%%%%%%%%%%%%%%%%%%%%%%%%%%%%%%%%

\section{Thermodynamic derivation of the model}\label{sec.model}

We assume that the collagen implant consists of two phases, the fiber phase $\phi\in[0,1]$ and the fluid phase $1-\phi\in[0,1]$, driven by the conservation laws
\begin{align}\label{2.cons}
  \pa_t\phi + \diver(\phi v_{\rm fiber}) = 0, \quad
	\pa_t(1-\phi) + \diver((1-\phi)v_{\rm fluid}) = 0\quad\mbox{in }{\T^d},
\end{align}
where ${\T^d}\subset\R^d$ is the $d$-dimensional torus, and $v_{\rm fiber}$ and $v_{\rm fluid}$ are the fiber and fluid velocity, respectively. The following arguments hold true for bounded domains $\Omega\subset\R^d$ and no-flux boundary conditions.
Generally, the phase averaged velocity $v$ is given by $v=\phi v_{\rm fiber} + (1-\phi)v_{\rm fluid}$. Then the sum of both equations in \eqref{2.cons} implies the incompressibility condition $\diver v=0$. We suppose as in \cite{RoFo08} that the phase averaged velocity vanishes, $v=0$, so that the fiber and fluid velocities are related according to
$$
  \phi v_{\rm fiber} + (1-\phi)v_{\rm fluid} = 0.
$$
The solute (or nutrient) concentration is assumed to be driven by the conservation equation
\begin{equation}\label{2.nutr}
  \pa_t c + \diver(c(1-\phi)v_{\rm fluid}) = \diver J_c,
\end{equation}
where $J_c$ is the nutrient flux, which is determined later. In particular, we neglect source or sink terms, which can be justified by the observation that typically protons, nutrients, etc.\ are abundant in the solute.

Expressions for $v_{\rm fiber}$ and $J_c$ are derived from the second law of thermodynamics. For this, we suppose that the energy consists of the correlation, interaction, and nutrient energies,
$E=E_{\rm corr}+E_{\rm inter}+E_{\rm nutr}$, where
$$
  E_{\rm corr} = \frac{\eps}{2}\int_{\T^d} |\na\phi|^2dx, \quad
	E_{\rm inter} = \frac{1}{\eps}\int_{\T^d} F(\phi)dx, \quad
	E_{\rm nutr} = \int_{\T^d} h(\phi,c)dx,
$$
and $\eps>0$ is related to the correlation length. The energy density $F(\phi)$ is given by  the Flory--Huggins expression \cite{Flo42,Hug41}
\begin{equation*}%\label{2.F}
  F(\phi) = \phi\ln\phi + (1-\phi)\ln(1-\phi) - \frac{\theta_0}{2}\phi(\phi-1),
\end{equation*}
where $\theta_0>0$ is the Flory--Huggins mixing parameter. The first two terms represent the thermodynamic energy, and the last term favors the states $\phi=0$ and $\phi=1$. The nutrient energy density is taken as in \cite[(2.63)]{GKT22}:
\begin{equation*}%\label{2.h}
  h(\phi,c) = \frac12 c^2 + c(1-\phi).
\end{equation*}
The first term increases the energy in the presence of nutrients, and the second term can be interpreted as a chemotaxis energy, which accounts for interactions between the nutrient and the fiber phase.

According to the second law of thermodynamics, the energy dissipation $-dE/dt$ should be nonnegative \cite[Axiom (IV)]{BoDr15}. To compute this expression, we introduce the (fiber) chemical potential by $\mu=\delta E/\delta\phi= -\eps\Delta\phi + \eps^{-1} f(\phi) + h_\phi$, recalling that $f=F'$. Then a formal computation gives
\begin{align}\label{2.dEdt}
  -\frac{dE}{dt} &= -\int_{\T^d}\big((-\eps\Delta\phi + \eps^{-1}f(\phi) + h_\phi)\pa_t\phi
	+ h_c\pa_t c\big)dx = -\int_{\T^d}(\mu\pa_t\phi + h_c\pa_t c)dx \\
	&= -\int_{\T^d}\big(\phi v_{\rm fiber}\cdot\na\mu + c(1-\phi)v_{\rm fluid}\cdot\na h_c
	- J_c\cdot\na h_c\big)dx \nonumber \\
	&= \int_{\T^d}\big(-\phi v_{\rm fiber}\cdot(\na\mu - c\na h_c) + J_c\cdot\na h_c\big)dx,
	\nonumber
\end{align}
where we used $(1-\phi)v_{\rm fluid}=-\phi v_{\rm fiber}$ in the last step. The condition $-dE/dt\ge 0$ restricts the choice of constitutive relations for $v_{\rm fiber}$ and $J_c$.
Supposing that each product on the right-hand side of \eqref{2.dEdt} is nonnegative \cite[Axiom (IV) (ii)]{BoDr15}, an admissible choice is
\begin{equation}\label{2.vfiber}
  \phi v_{\rm fiber} = -D_{\rm fiber}(\phi,c)(\na\mu-c\na h_c), \quad
	J_c = D_{\rm fluid}(\phi,c)\na h_c
\end{equation}
for some coefficients $D_{\rm fiber}(\phi,c)$ and $D_{\rm fluid}(\phi,c)$.
A more general choice is given by the linear combination
$$
  \phi v_{\rm fiber} = -D_{11}(\na\mu-c\na h_c) + D_{12}\na h_c, \quad
	J_c = -D_{12}(\na\mu-c\na h_c) + D_{22}\na h_c,
$$
with a positive semidefinite diffusion matrix $(D_{ij})=(D_{ij}(\phi,c))\in\R^{2\times 2}$, but we prefer \eqref{2.vfiber} for the sake of simplicity. We specify our model by choosing 
$D_{\rm fiber}(\phi,c)=m(\phi)=\phi^2(1-\phi)^2$ and $D_{\rm fluid}(\phi,c)=g(c)$. Then the final system \eqref{2.cons}--\eqref{2.nutr} equals \eqref{1.phi}--\eqref{1.mu} and the energy equality is given by \eqref{1.ei}.

This corresponds to the model of \cite{RoFo08} with two differences. First, the model of \cite{RoFo08} includes the elastic energy density and not the mixing energy $(\theta_0/2)\phi(1-\phi)$. The elastic energy involves the number of monomers between cross-links, linking one polymer chain to another, and depends on the concentration $c$. Second, the diffusion flux in \cite{RoFo08} is given by $J_c=D(\phi)\na c$, leading to a diffusion equation for the solute concentration, which is driven by the fluid velocity $v_{\rm fluid}$. However, since $v_{\rm fluid}$ is coupled to the equation for the fiber phase, this choice is thermodynamically not correct. We need a choice like $J_c=D_{\rm fluid}\na h_c$ to ensure the thermodynamic consistency. From a modeling viewpoint, this is the main difference between our model and the model of \cite{RoFo08}.

%%%%%%%%%%%%%%%%%%%%%%%%%%%%%%%%%%%%%%%%%%%%%%%%%%%%%%%%%%%%%%%%%%%

\section{Stabilized semi-convex-splitting scheme}\label{sec.scheme}

% Let $N\in\N$, let $T>0$ be the final time, $\tau=T/N$ the time step size, and $\sigma>0$ the stabilization parameter. We use a quasi-conforming triangulation on the torus and introduce the space $X_h$ of continuous linear finite elements on the torus, where $h>0$ is a measure of the size of the spatial grid. The initial data is projected on the space $X_h$ and we write $\phi_h^0=I^{h}\phi^0$, $c_h^0=I^{h}c^0$, where $I^{h}$ is the interpolation from $L^2(\T^d)$ onto $X_h$.

Let $N\in\N$, let $T>0$ be the final time, $\tau=T/N$ the time step size, and $\sigma>0$ the stabilization parameter. {We use a quasi-conform triangulation $\{\mathcal{T}^h\}$ with disjoint open simplices $\kappa$ satisfying $\mathbb{T}^d = \cup_{\kappa\in \mathcal{T}^h}\overline\kappa$ and introduce the space $X_h$ of continuous linear finite elements on the torus, where $h>0$ is a measure of the size of the spatial grid (not to be confused with the energy function $h(\phi,c)$). Denote the set of the nodes by $J$ and the coordinates of these nodes by $\{x_j\}_{j\in J}$. Let $\{p_j\}$ be the set of standard basis functions for $X_h$ defined by $p_i(x_j)=\delta_{ij}$. We introduce the standard nodal interpolation operator $I^h:C(\T^d)\to X_h$ by $(I^h q)(x_j)=q(x_j)$ for all $q\in C(\T^d)$ and $j\in J$. We also introduce the discrete semi-inner product and the induced semi-norm on $C(\T^d)$ by
\begin{align*}
  (u,v)_h = \int_{\T^d}I^h[u(x)v(x)]dx 
  := \sum_{j\in J}\beta_ju(x_j)v(x_j),\quad 
  |u|_h=\sqrt{(u,u)_h} \quad\mbox{for } u,v\in C(\T^d),
\end{align*}
where $\beta_j=\int_{\T^d}p_j dx$. The initial data is interpolated on $X_h$, and we write $\phi_h^0=I^{h}\phi^0$, $c_h^0=I^{h}c^0$.}

%Below we recall some well-known results concerning $X_h$. 
%There exists a constant $C>0$ such that
%\begin{align} \|v\|_\infty\le Ch^{-\frac{d}2}\|v\|_2 \mbox{ for all } v\in X_h;\label{v-inf}\\
%\|v\|_{H^1}\le Ch^{-1}\|v\|_2 \mbox{ for all } v\in X_h. \label{v-1-l2}
%\end{align}
As explained in the introduction, the idea of the convex-splitting scheme is to write the energy as the difference of two convex functions and treat one function implicitly and the other one explicitly. We define
\begin{align*}
  & f(\phi) = f_1(\phi) - f_2(\phi), \quad\mbox{where }
	f_1(\phi) = \ln\phi - \ln(1-\phi), \quad f_2(\phi) = \frac{\theta_0}{2}(2\phi-1), \\
	& h(\phi,c) = h_1(c) - h_2(\phi,c), \quad\mbox{where }
	h_1(c) = \frac12 c^2, \quad h_2(\phi,c) = c(\phi-1).
\end{align*}
Then $h_{1,c}(c)=c$, $h_{2,c}(\phi,c)=\phi-1$,  $h_\phi(\phi,c)=-h_{2\phi}(\phi,c)=-c$, and consequently
$h_{1,c}(c_h^{n+1})-h_{2,c}(\phi_h^n) = c_h^{n+1}+1-\phi_h^n$.
We recall that the diffusivity $g(c)$ is assumed to be nonnegative and $g(0)=0$. An example is $g(c)=c^2$.
We wish to solve
\begin{align}
   & 0 = \frac{1}{\tau}\int_{\T^d}I^h[(\phi_h^{n+1}-\phi_h^n)\zeta_h] dx
	+ \sigma\tau\int_{\T^d}I^h[(\mu_h^{n+1}-\mu_h^n)\zeta_h] dx 
	\label{2.phi} \\
  &\phantom{xxx} + \int_{\T^d}m(\phi_h^n)\big(\na\mu_h^{n+1} 
    - c_h^n\na(c_h^{n+1}+1-\phi_h^n)\big)
	\cdot\na\zeta_h dx, \nonumber\\
  &0 = \frac{1}{\tau}\int_{\T^d}I^h[(c_h^{n+1}-c_h^n)\xi_h] dx
    + \int_{\T^d} g(c_h^n)\na(c_h^{n+1}+1-\phi_h^n)\cdot\na\xi_h dx \label{2.c} \\
  &\phantom{xxx} - \int_{\T^d}c_h^n m(\phi_h^n)\big(\na\mu_h^{n+1}
	-c_h^n\na(c_h^{n+1}+1-\phi_h^n)\big)\cdot\na\xi_h dx, \nonumber \\
  &\int_{\T^d}I^h[\mu_h^{n+1}\chi_h] dx 
  = \eps\int_{\T^d}\na\phi_h^{n+1}\cdot\na\chi_h dx \label{2.mu} \\
  &\phantom{xxx}+ \int_{\T^d}I^h\bigg[\bigg(\frac{1}{\eps}
  (f_1(\phi_h^{n+1})-f_2(\phi_h^n))-c_h^{n+1}\bigg)\chi_h\bigg] dx
  \nonumber
\end{align}
for all $(\zeta_h,\xi_h,\chi_h)\in X_h^3$ and $0\le n\le N-1$. Notice that equations \eqref{2.phi} and \eqref{2.c} are linear in $(\phi_h^{n+1},c_h^{n+1},\mu_h^{n+1})$, which simplifies the numerical implementation. We only need to implement the Newton method for the semilinear equation \eqref{2.mu}.

% Let $\overline{u}=|\T^d|^{-1}\int_{\T^d}I^h[u(x)]dx$ for integrable functions $u:\T^d\to\R$.
Our main result reads as follows.

\begin{theorem}[Existence of a discrete solution]\label{thm.ex}
Let $\sigma>0$ be a given constant, $(\phi_h^0,c_h^0)\in X_h^2$ with $0<\phi_h^0<1$ in ${\T^d}$, and let the time step size $\tau>0$ be sufficiently small. Let $g:\R\to\R$ be nonnegative and $g(0)=0$. Then, for all $n=1,\ldots,N$,
there exists a {unique solution $(\phi_h^{n+1},c_h^{n+1},\mu_h^{n+1})\in X_h^3$} to \eqref{2.phi}--\eqref{2.mu}
satisfying $0<\phi_h^n<1$ a.e. in $\T^d$ and
\begin{align}\label{2.Eineq}
  E^{n+1} - E^n &\le -\tau\int_{\T^d}m(\phi_h^{n})\big|\na\mu_h^{n+1}
	- c_h^n\na(c_h^{n+1}+1-\phi_h^n)\big|^2dx \\
	&\phantom{xx}- \tau\int_{\T^d}g(c_h^n)
	\big|\na(c_h^{n+1}+1-\phi_h^n)\big|^2 dx,\nonumber
\end{align}
where the discrete energy is given by
{$$  
  E^n= \int_{\T^d}\bigg\{\frac{\eps}{2}|\na\phi^n|^2 
  + I^h\bigg[\frac{1}{\eps}F(\phi^n)+ h(\phi^n,c^n)\bigg]\bigg\}dx 
  + \frac{\sigma\tau^2}{2}\int_{\T^d}I^h[|\mu_h^{n}|^2]dx.
$$}
\end{theorem}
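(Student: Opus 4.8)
The plan is to proceed by induction on the time level $n$, at each step regularizing the singular potential, proving existence for the regularized finite-dimensional problem, and then using a $\delta$-uniform a priori bound to force the regularization to be inactive and the fiber fraction into $(0,1)$. Assume inductively that $(\phi_h^n,c_h^n,\mu_h^n)\in X_h^3$ is already constructed as a genuine solution with $0<\phi_h^n<1$; since $\phi_h^n$ takes finitely many nodal values there is $\delta_n>0$ with all of them in $(\delta_n,1-\delta_n)$. For $\delta\in(0,\delta_n)$ I would replace $f_1(s)=\ln s-\ln(1-s)$ by the monotone, globally Lipschitz function $f_{1,\delta}$ obtained by continuing $f_1$ affinely outside $[\delta,1-\delta]$, with antiderivative $F_{1,\delta}$ a convex $C^1$ function of quadratic growth.

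First I would establish existence for the $\delta$-regularized version of \eqref{2.phi}--\eqref{2.mu}. Because \eqref{2.phi} and \eqref{2.c} are linear in $(\phi_h^{n+1},c_h^{n+1},\mu_h^{n+1})$ and only \eqref{2.mu} is nonlinear---now through the monotone Lipschitz map $f_{1,\delta}$---the scheme defines a continuous vector field on $\R^{3|J|}$ whose zeros are the discrete solutions. I would run a Brouwer-/Leray--Schauder-type argument: testing the residual with $\delta E^{n+1,n}=(\mu_h^{n+1},c_h^{n+1}+1-\phi_h^n)$ and invoking the positive semidefiniteness of $M(u_h^n)$ and the stabilization contribution $\sigma\tau^2|\mu_h^{n+1}|_h^2$ gives an a priori bound inside a ball, which yields a zero. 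The stabilization is indispensable here: since $m(\phi)$ and $g(c)$ degenerate, only the $\sigma$-term controls $|\mu_h^{n+1}|_h$, and the quadratic growth of $F_{1,\delta}$ dominates the explicit concave part for fixed $\delta$, making the functional coercive in $\phi_h^{n+1}$.

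Next I would derive the discrete energy inequality \eqref{2.Eineq} for the regularized scheme. Testing \eqref{2.phi} with $\zeta_h=\mu_h^{n+1}$, \eqref{2.c} with $\xi_h=c_h^{n+1}+1-\phi_h^n$, and \eqref{2.mu} with $\chi_h=\phi_h^{n+1}-\phi_h^n$, the mobility terms combine into the two nonnegative dissipation integrals of \eqref{2.Eineq}, while the energy difference is recovered from three elementary convexity facts---$a(a-b)\ge\tfrac12(a^2-b^2)$ for the gradient and the $\tfrac12 c^2$ term, convexity of $F_{1,\delta}$ and $F_2$ for the interaction energy, and the bilinear identity for $h_2$ recorded in the introduction for the nutrient energy---and the stabilization contributes $\tfrac{\sigma\tau^2}{2}(|\mu_h^{n+1}|_h^2-|\mu_h^n|_h^2)$. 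Since $F_{1,\delta}=F_1$ at the nodes of $\phi_h^n$ for $\delta<\delta_n$, the regularized initial energy equals the true one, so telescoping gives $|\mu_h^{n+1}|_h\le C$ uniformly in $\delta$. Completing the square in $c$ (the quadratic growth of $F_{1,\delta}$ majorizes the concave $-F_2$ uniformly for small $\delta$) together with the near-conservation of the lumped $\phi$-mass obtained by testing \eqref{2.phi} with $\zeta_h=1$ then upgrades this to $\delta$-uniform bounds on $|c_h^{n+1}|_h$ and $\|\na\phi_h^{n+1}\|_{L^2(\T^d)}$.

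The crux, following \cite{BBH2000}, is the uniform bound on $f_{1,\delta}$. Testing \eqref{2.mu} with $\chi_h=I^h[f_{1,\delta}(\phi_h^{n+1})]\in X_h$ and using the discrete monotonicity inequality $\int_{\T^d}\na\phi_h^{n+1}\cdot\na I^h[f_{1,\delta}(\phi_h^{n+1})]\,dx\ge 0$---valid because $f_{1,\delta}$ is nondecreasing and the stiffness matrix has nonpositive off-diagonal entries---I would absorb the gradient term and bound $\eps^{-1}|f_{1,\delta}(\phi_h^{n+1})|_h^2$ by $|\mu_h^{n+1}|_h$, $|f_2(\phi_h^n)|_h$ and $|c_h^{n+1}|_h$ via Cauchy--Schwarz and Young, obtaining $|f_{1,\delta}(\phi_h^{n+1})|_h\le C$ with $C$ independent of $\delta$. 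Node by node this reads $|f_{1,\delta}(\phi_h^{n+1}(x_j))|\le C\beta_j^{-1/2}$; since $f_1(\delta)\to-\infty$ as $\delta\to 0$, a nodal value close to $0$ or $1$ would violate this, so every nodal value lies in a fixed $[a,1-a]\subset(0,1)$ with $a$ independent of $\delta$. Choosing $\delta<a$ renders the regularization inactive, so $(\phi_h^{n+1},c_h^{n+1},\mu_h^{n+1})$ solves the original scheme with $0<\phi_h^{n+1}<1$ and satisfies \eqref{2.Eineq}. Uniqueness I would obtain by subtracting two solutions, testing as above, and using strict monotonicity of $f_1$ and positive definiteness of $M(u_h^n)$, absorbing the cross term $\tfrac1\tau(c,\phi)_h$ for $\tau$ small---this is where smallness of $\tau$ enters. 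I expect the main obstacle to be exactly this last block: the discrete monotonicity inequality, which forces a non-obtuseness/Delaunay condition on the triangulation and is what couples the singular potential to the bound, together with the $\delta$-uniform control of $c_h^{n+1}$ and $\mu_h^{n+1}$ that makes the bound usable despite the degeneracy of $m$ and $g$.
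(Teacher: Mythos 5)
Your proposal is correct, and its core coincides with the paper's own strategy: the same affine regularization $f_{1,\delta}$ of \eqref{func.f}, a Brouwer fixed-point argument for the regularized scheme whose coercivity comes from the stabilization term (for $\mu_h$) and the quadratic growth of $F_{1,\delta}$ (for $\phi_h$), the discrete energy inequality obtained with the test functions $\mu_\delta^{n+1}$, $c_\delta^{n+1}+1-\phi_h^n$, $\phi_\delta^{n+1}-\phi_h^n$, and the $\delta$-uniform bound on $I^h[|f_{1,\delta}(\phi_\delta^{n+1})|^2]$ obtained by testing \eqref{3.mu} with $I^h[f_{1,\delta}(\phi_\delta^{n+1})]$. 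You genuinely diverge in the endgame. The paper passes to the limit $\delta\to0$ (weak $H^1$ compactness, nodal convergence, convergence of the inverse functions $[f_{1,\delta}]^{-1}\to[f_1]^{-1}$ to identify the limit, lower semicontinuity for the energy inequality), whereas you observe that the nodal bound $|f_{1,\delta}(\phi_\delta^{n+1}(x_j))|\le C\beta_j^{-1/2}$, with $C$ independent of $\delta$, traps every nodal value in a fixed $[a,1-a]\subset(0,1)$ once $\delta$ is small (a nodal value $\le\delta$ would give $f_{1,\delta}\le\ln\delta+\ln 2\to-\infty$), so that for $\delta<\min(a,\delta_n)$ the regularization is never activated and the regularized solution already solves \eqref{2.phi}--\eqref{2.mu} and satisfies \eqref{2.Eineq}. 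Since the scheme and the discrete energy evaluate $f_{1,\delta}$ and $F_{1,\delta}$ only at the nodes, this shortcut is sound; it is more elementary than the paper's compactness argument, exploits the finite-dimensionality directly, and dispenses with the inverse-function identification, at no loss of generality here. Two further remarks. Your discrete monotonicity inequality $\int_{\T^d}\na\phi_\delta^{n+1}\cdot\na I^h[f_{1,\delta}(\phi_\delta^{n+1})]\,dx\ge0$ is in substance identical to the paper's estimate \eqref{con.2}: the paper's pairwise convexity inequality, multiplied by $-\eps\int_\kappa\na p_i\cdot\na p_j\,dx$ and summed, yields exactly zero because the local stiffness matrix has vanishing row sums. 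Crucially, both versions are valid only if the off-diagonal entries $\int_\kappa\na p_i\cdot\na p_j\,dx$, $i\ne j$, are nonpositive — the nonobtuseness condition you state explicitly; the paper uses it implicitly (it is not implied by ``quasi-conform''), so flagging it is a virtue of your write-up, not a defect relative to the paper. Finally, for uniqueness the paper weights \eqref{2.dmu} by $\tau^{-1}$ so that all terms scale alike and no condition on $\tau$ arises, relying instead on $f_1'\ge4$ together with a restriction on $\eps$, while you absorb the cross term $(\widehat c,\widehat\phi)_h$ at the price of $\tau$ small; both variants are compatible with the theorem's hypotheses.
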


The smallness of the time step size $\tau$ is also required in \cite[Theorem 4.1]{GKT22} to show the existence of discrete solutions. This restriction is not needed to derive the energy stability \eqref{2.Eineq}. Inequality \eqref{2.Eineq} is formally derived by using $\tau\mu_h^{n+1}$, $h_c(\phi^n,c^{n+1}) = c_h^{n+1}+1-\phi_h^n$, and $\phi_h^{n+1}-\phi_h^n$ as test functions in \eqref{2.phi}, \eqref{2.c}, and \eqref{2.mu} respectively. Here, we exploit the fact that $h(\phi^n,c^{n+1})$ is an element of the finite-element space, which requires an (at most) quadratic nutrient energy. For more general expressions, we need to interpolate the function, which results in correction terms whose estimation may lead to an approximate energy inequality only.

The proof of existence of solutions is based on the Brouwer fixed-point theorem, a priori estimates coming from the discrete energy inequality \eqref{2.Eineq}, and additional mesh-size depending estimates. To deal with the singularity of $f_1$ at $\phi\in\{0,1\}$, we regularize this function by some $f_{1,\delta}$. The energy inequality provides basic a priori estimates for the approximate sequences $(\phi_\delta^{n+1})$, $(\nabla\phi_\delta^{n+1})$, $(c_\delta^{n+1})$, and, thanks to the stabilization term, for $(\mu_\delta^{n+1})$. The energy dissipation terms cannot be easily exploited; therefore, we use inverse inequalities to infer bounds for $\na\mu_\delta^{n+1}$ and $\na c_\delta^{n+1}$, which are independent of $\delta$ but may depend on the mesh size.

% It remains to establish the convergence of the nonlinear singular term $f_{1,\delta}$ when $\delta\to 0$. To do this, we prove an $L^1(\T^d)$ bound for $f_{1,\delta}(\phi_\delta^{n+1})$ and an $L^2(\T^d)$ bound for $P_{X_h}f_{1,\delta}(\phi_\delta^{n+1})$, where $P_{X_h}$ is the projection on $X_h$. Then, by compactness, we can pass to the de-regularization limit $\delta\to 0$. Finally, arguing like in \cite[p.~5272]{FrGr12}, we deduce from the $L^1(\T^d)$ bound for $f_{1,\delta}(\phi_\delta^{n+1})$ that the limit $\phi^{n+1}_h=\lim_{\delta\to 0}\phi_\delta^{n+1}$ satisfies the bounds $0<\phi_h^{n+1}<1$ a.e.

It remains to establish the convergence of the nonlinear singular term $f_{1,\delta}(\phi_\delta^{n+1})$. To do this, we prove an uniform bound for $I^h[|f_{1,\delta}(\phi_\delta^{n+1})|^2]$. Thanking to the stabilization term with respect to $\mu$ which provides a uniform estimate for $\mu_\delta^{n+1}$, we deduce the integral bound for $I^h[|f_{1,\delta}(\phi_\delta^{n+1})|^2]$ from \eqref{2.mu}. Finally, arguing like in \cite{BBH2000}, the limit $\phi_h^{n+1}=\lim_{\delta\to 0}\phi_\delta^{n+1}$ satisfies the constraints $0<\phi_h^{n+1}<1$.

\begin{remark}\label{rem.pos}\rm
We discuss whether the nonnegativity of the discrete concentration $c_h$ can be expected. The discrete minimum or maximum principle cannot be proven from a Stampacchia truncation argument, since the test function $\min\{0,c_h^n\}$ for some $c_h^n\in X_h$ is generally not an element of the finite-element space. In fact, the discrete maximum principle generally does not hold for finite elements \cite{Vej04}. It is possible to prove a discrete maximum principle by requiring some conditions on the mesh, like acute triangulations \cite{KaKo05,XuZi99}. One idea is to define the test function $v_h(A)=\min\{0,c_h^n(A)\}$ at the nodal points $A$ \cite{WaZh12}. However, this function is not compatible with the time discretization and cannot be used for our equations. For general meshes, it was proved in \cite[Theorem~9]{JuUn05} that if the solution $c$ to the continuous problem is nonnegative then the finite-element solution $c_h^n$ satisfies $c_h^n\ge \min c_h^0 - Ch^\alpha$, where $C>0$ depends on the data and $\alpha>1$. Thus, the solution $c_h^n$ may be negative but it becomes positive if $\min c_0^h$ is positive and the mesh size $h$ is sufficiently small. Our numerical simulations in Section \ref{sec.num} confirm this statement.
\qed\end{remark}

\begin{remark}\label{rem.lit}\rm
{In the literature, the existence and positivity of discrete solutions to Cahn--Hilliard-type equations can be also proved by solving a minimization problem of a convex functional \cite{CWWW19,DWZZ19,DWZZ20} or by using a mass lumped finite-element method \cite{YCWWZ21}. In this paper, we are not considering a pure convex-splitting scheme like in \cite{CWWW19,DWZZ19,DWZZ20} such that the results in these papers cannot be directly used. Therefore, we provide an existence proof.}
\qed\end{remark}

%%%%%%%%%%%%%%%%%%%%%%%%%%%%%%%%%%%%%%%%%%%%%%%%%%%%%%%%%%%%%%%%%%%%

\section{Proof of Theorem \ref{thm.ex}}\label{sec.ex}

We split the proof in several steps. {First, we prove the uniqueness of solutions.} Then we show the existence of solutions to a regularized scheme, avoiding possible singularities when dealing with the logarithm, derive an approximate energy inequality and further estimates uniform in the regularization parameters, and finally pass to the de-regularization limit using compactness arguments.

\subsection{Uniqueness of solutions}

{Let $n\geq0$ be fixed. We show the uniqueness of solutions to \eqref{2.phi}--\eqref{2.mu} satisfying $0\leq\phi_h^n<1$. Suppose that there are two different solutions $(\phi_{i},c_{i},\mu_{i})$ to \eqref{2.phi}--\eqref{2.mu} with $i=1,2$ satisfying  $\phi_{i}\in[a^n,b^n]\subset(0,1)$. Furthermore,, let $(\widehat\phi,\widehat c,\widehat\mu) 
= (\phi_1-\phi_2,c_1-c_2,\mu_1-\mu_2)$.
We substract \eqref{2.phi}--\eqref{2.mu} with $(\phi_{n+1},c_{n+1},\mu_{n+1})$ replaced by $(\phi_{i},c_{i},\mu_{i})$ for $i=1,2$, respectively, to find that
\begin{align}
	& 0 = \frac{1}{\tau}\int_{\T^d}I_h[\widehat\phi\zeta_h] dx
		+ \sigma\tau\int_{\T^d}I_h[\widehat\mu\zeta_h] dx 
		+ \int_{\T^d}m(\phi_h^n)\big(\na\widehat\mu 
		- c_h^n\na\widehat c\big)\cdot\na\zeta_h dx, \label{2.dphi}\\
	& 0 = \frac{1}{\tau}\int_{\T^d}I_h[\widehat c\xi_h] dx
		+ \int_{\T^d} g(c_h^n)\na\widehat c\cdot\na\xi_h dx
 		- \int_{\T^d}c_h^n m(\phi_h^n)\big(\na\widehat\mu
		-c_h^n\na\widehat c\big)\cdot\na\xi_h dx, \label{2.dc}  \\
	&\int_{\T^d}I_h[\widehat\mu\chi_h] dx 
		= \eps\int_{\T^d}\na\widehat\phi\cdot\na\chi_h dx
		+ \int_{\T^d}I_h\bigg[\bigg(\frac{1}{\eps}
		(f_1(\phi_1)-f_1(\phi_2))-\widehat c\bigg)\chi_h\bigg] dx.
		\label{2.dmu}
\end{align}
We choose the test functions $(\zeta_h,\xi_h,\chi_h) = (\widehat\mu,\widehat c,\widehat\phi)\in X_h^3$ and sum \eqref{2.dphi}, \eqref{2.dc}, and \eqref{2.dmu}$\times \tau^{-1}$:
\begin{align*}
  0 &= \int_{\T^d}\big(\eps\tau^{-1}|\na\widehat\phi|^2
  + I_h[\widehat c^2+\sigma\tau\widehat\mu^2]\big) dx 
  + \int_{\T^d}m(\phi_h^n)|\na\widehat\mu - c_h^n\na\widehat c|^2 dx, \\
  &\phantom{xx} + \int_{\T^d} g(c_h^n)|\na\widehat c|^2 dx 
  + \frac{1}{\tau}\int_{\T^d}I_h\bigg[\bigg(\frac{1}{\eps}
  (f_1(\phi_1)-f_1(\phi_2))-\widehat c\bigg)\widehat\phi\bigg] dx.
\end{align*}
Recalling that $\phi_{1,2}\in[a^n,b^n]\subset(0,1)$, the mean-value value theorem gives $f_1(\phi_1)-f_1(\phi_2) = f_1'(\xi)\widehat\phi$ for some $\xi\in(0,1)$. We deduce from $f_1'(\xi)\geq4$ for all $\xi\in(0,1)$ that $(f_1(\phi_1)-f_1(\phi_2))\hat\phi \geq 4\widehat\phi^2$. Therefore, using Young's inequality,
\begin{align*}
		0 &\geq\int_{\T^d}\bigg(\eps\tau^{-1}|\na\widehat\phi|^2
		+I_h\bigg[\frac12\widehat c^2
		+\sigma\tau\widehat\mu^2\bigg]\bigg) dx 
		+ \int_{\T^d}m(\phi_h^n)|\na\widehat\mu 
		- c_h^n\na\widehat c|^2 dx \\
		&\phantom{xx} + \int_{\T^d} g(c_h^n)|\na\widehat c|^2 dx 
		+ \frac{1}{\tau}\int_{\T^d}I_h\bigg[\bigg(\frac{4}{\eps}
		-\frac12\bigg)\widehat\phi^2\bigg] dx.
\end{align*}
Choosing $0<\eps<2,$ we conclude that $(\widehat\phi,\widehat c,\widehat\mu) =0$. This implies that  $(\phi_{1},c_{1},\mu_{1})(x_j)=(\phi_{2},c_{2},\mu_{2})(x_j)$ for all $j\in J$. Therefore $(\phi_{1},c_{1},\mu_{1})=(\phi_{2},c_{2},\mu_{2})$, finishing the proof.}

\subsection{Regularized problem and regularized energy inequality}

We first show the existence of a solution to a regularized scheme. Let $\delta\in(0,1/2)$ and
\begin{align}\label{func.f}
  F_{1,\delta}(\phi) &= \begin{cases}
	(1-\phi)\ln(1-\phi) + \frac{(\phi-\delta)^2}{2\delta} + (\ln\delta+1)(\phi-\delta)
	+ \delta\ln\delta &\mbox{if }\phi\le\delta, \\
	\phi\ln\phi + (1-\phi)\ln(1-\phi) &\mbox{if }\delta<\phi<1-\delta, \\
	\phi\ln\phi + \frac{(\phi-1+\delta)^2}{2\delta} - (\ln\delta+1)(\phi-1+\delta)
	+ \delta\ln\delta &\mbox{if }1-\delta\le\phi,
	\end{cases} \\
  f_{1,\delta}(\phi) &= \begin{cases}
	\delta^{-1}(\phi-\delta) + \ln\delta - \ln(1-\phi) & \mbox{if }\phi\le\delta, \\
	\ln\phi-\ln(1-\phi) & \mbox{if }\delta<\phi<1-\phi, \\
	\delta^{-1}(\phi-1+\delta) + \ln\phi - \ln\delta &\mbox{if }1-\delta\le\phi.
	\end{cases}
\end{align}
Then $F'_{1,\delta}=f_{1,\delta}$. Recall that $f_2(\phi)=\theta_0(2\phi-1)/2$. Given $(\phi_h^n,c_h^n,\mu_h^n)\in X_h^3$, we wish to find a solution
$(\phi_\delta^{n+1},c_\delta^{n+1},\mu_\delta^{n+1})\in X_h^3$ to the regularized problem
\begin{align}
  0 &= \frac{1}{\tau}\int_{\T^d}I^h[(\phi_\delta^{n+1}
  -\phi_h^n)\zeta_h] dx
  + \sigma\tau \int_{\T^d}I^h[(\mu_\delta^{n+1}-\mu_h^n)\zeta_h] dx 
  \label{3.phi} \\
  &\phantom{xx}+ \int_{\T^d}m(\phi_h^n)
	\big(\na\mu_\delta^{n+1} 
	- c_h^n\na(c_\delta^{n+1}+1-\phi_h^n)\big)
	\cdot\na\zeta_h dx, \nonumber \\
  0 &= \frac{1}{\tau}\int_{\T^d}I^h[(c_\delta^{n+1}-c_h^n)\xi_h] dx
    + \int_{\T^d}g(c_h^{n})\na(c_\delta^{n+1}+1-\phi_h^n)
    \cdot\na\xi_h dx \label{3.c} \\
  &\phantom{xx}- \int_{\T^d}c_h^n m(\phi_h^n)
    \big(\na\mu_\delta^{n+1}-c_h^n
	\na(c_\delta^{n+1}+1-\phi_h^n)\big)\cdot\na\xi_h dx, 
	\nonumber \\
  \int_{\T^d}I^h[\mu_\delta^{n+1}\chi_h] dx 
    &= \eps\int_{\T^d}\na\phi_\delta^{n+1}\cdot\na\chi_h dx
	+ \int_{\T^d}I^h\bigg[\bigg(\frac{1}{\eps}(f_{1,\delta}(\phi_\delta^{n+1})
	- f_2(\phi_h^n)) - c_\delta^{n+1}\bigg)\chi_h\bigg] dx \label{3.mu}
\end{align}
for all $(\zeta_h,\xi_h,\chi_h)\in X_h^3$. We show an energy inequality associated to the regularized scheme.

\begin{lemma}\label{lem.Edelta}
It holds for given $(\phi_h^n,c_h^n,\mu_h^n)$,
$(\phi_\delta^{n+1},c_\delta^{n+1},\mu_\delta^{n+1})\in X_h^3$ solving \eqref{3.phi}--\eqref{3.mu} that
\begin{align*}
  E^{n+1}_\delta - E^n_h &\le -\tau\int_{\T^d}m(\phi_h^n)\big|\na\mu_\delta^{n+1}
	- c_h^n\na(c_\delta^{n+1}+1-\phi_h^n)\big|^2 dx \\
	&\phantom{xx}- \tau\int_{\T^d}g(c_h^n)
	\big|\na(c_\delta^{n+1}+1-\phi_h^n)\big|^2dx,
\end{align*}
where
$$
  E^n_{h/\delta} = \int_{\T^d}\bigg(\frac{\eps}{2}
  |\na\phi_{h/\delta}^n|^2 + I^h\bigg[\frac{\sigma\tau^2}2|\mu_{h/\delta}^n|^2
  + \frac{1}{\eps}(F_{1,\delta}(\phi_{h/\delta}^n)
  -F_{2}(\phi_{h/\delta}^n)) 
  + h(\phi_{h/\delta}^n,c_{h/\delta}^n)\bigg]\bigg)dx.
$$
Moreover, the solution conserves the mass in the sense
$$
  %\int_{\T^d}\phi_\delta^{n+1} dx = \int_{\T^d}\phi_h^n dx, \quad
	\int_{\T^d} c_\delta^{n+1}dx = \int_{\T^d} c_h^n dx.
$$
\end{lemma}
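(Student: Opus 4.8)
The plan is to establish the two assertions of Lemma~\ref{lem.Edelta} by choosing the same test functions that produce the formal energy dissipation \eqref{1.ei}, but now at the discrete level, and then to control the sign of each resulting term using convexity of $F_{1,\delta}$ and $h_1$, the special bilinear structure of $h_2$, and the positive semidefiniteness of the mobility matrix $M(u_h^n)$. Concretely, I would test \eqref{3.phi} with $\zeta_h = \tau\mu_\delta^{n+1}$, test \eqref{3.c} with $\xi_h = \tau(c_\delta^{n+1}+1-\phi_h^n)$ (which is exactly $\tau\,h_c(\phi_h^n,c_\delta^{n+1})$ and lies in $X_h$ because the nutrient energy is quadratic), and test \eqref{3.mu} with $\chi_h = \phi_\delta^{n+1}-\phi_h^n$. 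I would then add the first two identities and subtract the third, so that the two mobility-weighted gradient terms combine into exactly the dissipation integrand $\na(\delta E^{n+1,n})^T M(u_h^n)\na(\delta E^{n+1,n})$ appearing on the right-hand side; this is the step where the cross-diffusion cancellation is essential, and it must be verified componentwise against the definition of $M(u)$.

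Next I would treat the accumulation terms. The $\eps^{-1}F_{1,\delta}'(\phi_\delta^{n+1})$ contribution against $\phi_\delta^{n+1}-\phi_h^n$ is bounded below by $\eps^{-1}(F_{1,\delta}(\phi_\delta^{n+1})-F_{1,\delta}(\phi_h^n))$ by convexity of $F_{1,\delta}$ (the defining tangent-line inequality), and the explicit term $-\eps^{-1}f_2(\phi_h^n)$ produces $-\eps^{-1}(F_2(\phi_\delta^{n+1})-F_2(\phi_h^n))$ up to a term of the right sign because $F_2$ is convex and is being treated explicitly; here one uses $F_2(\bar\phi)-F_2(\phi)\le F_2'(\bar\phi)(\bar\phi-\phi)$, i.e.\ the convexity inequality in the direction that makes the explicit splitting energy-decreasing. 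The Laplacian term gives the telescoping gradient difference, from which $\tfrac{\eps}{2}|\na\phi|^2$ is recovered via the elementary identity $a\cdot(a-b)\ge\tfrac12(|a|^2-|b|^2)$. For the nutrient part, I would invoke the exact discrete identity quoted in the introduction,
\[
  (\bar\phi-\phi)h_{2c}(\phi)+(\bar c-c)h_{2c}(\bar\phi)
  = h_2(\bar\phi,\bar c)-h_2(\phi,c),
\]
applied nodewise under $I^h$, together with $h_1$-convexity for the $c^2/2$ piece; combined with the $c_h^{n+1}$ cross-term in \eqref{3.mu} these assemble into $h(\phi_\delta^{n+1},c_\delta^{n+1})-h(\phi_h^n,c_h^n)$. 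Finally the stabilization test-term $\sigma\tau^2\int_{\T^d}I^h[(\mu_\delta^{n+1}-\mu_h^n)\mu_\delta^{n+1}]dx$ is bounded below by $\tfrac{\sigma\tau^2}{2}(|\mu_\delta^{n+1}|_h^2-|\mu_h^n|_h^2)$ by the same polarization identity. Collecting all contributions yields exactly $E_\delta^{n+1}-E_h^n$ on the left, dominated by the negative dissipation on the right.

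For \textbf{mass conservation} I would simply take $\xi_h\equiv 1\in X_h$ (the constant function is admissible since $\T^d$ is closed) in \eqref{3.c}. Both gradient integrals vanish because $\na\xi_h=0$, leaving $\tau^{-1}\int_{\T^d}I^h[(c_\delta^{n+1}-c_h^n)]dx=0$; since $I^h$ reproduces the constant $1$ and the interpolated integral of a single function equals $\sum_j\beta_j(c_\delta^{n+1}-c_h^n)(x_j)$, this gives the stated conservation once one notes $\int_{\T^d}I^h[w]dx=\sum_j\beta_j w(x_j)$ and that the numerical quadrature is exact for the mass functional. The main obstacle is \textbf{the nutrient/cross-diffusion bookkeeping}: one must track that the mixed terms generated by testing \eqref{3.phi} and \eqref{3.c} with the two different multipliers reorganize precisely into the quadratic form $\na v^T M(u_h^n)\na v$ with $v=\delta E^{n+1,n}$, and that simultaneously the discrete $h_2$-identity and $h_1$-convexity deliver the exact nutrient energy increment rather than an increment with an uncontrolled remainder. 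The quadratic choice of $h$ is what makes $h_c(\phi_h^n,c_\delta^{n+1})$ a genuine element of $X_h$ so that no interpolation error contaminates the identity; I would flag this explicitly, since for a non-quadratic nutrient energy only an approximate inequality would survive.
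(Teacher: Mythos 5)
Your proposal is correct and follows the paper's own proof essentially step for step: the same test functions ($\mu_\delta^{n+1}$, $c_\delta^{n+1}+1-\phi_h^n$, and $\phi_\delta^{n+1}-\phi_h^n$, up to the harmless factor $\tau$), the same combination of the three resulting identities, the same convexity/tangent-line estimates for $F_{1,\delta}$, $F_2$, $h_1$ and the stabilization term, the same exact handling of the bilinear $h_2$ contribution (the paper does this by direct expansion of $-c_\delta^{n+1}(\phi_\delta^{n+1}-\phi_h^n)+(c_\delta^{n+1}-c_h^n)(c_\delta^{n+1}+1-\phi_h^n)$ rather than by citing the identity, but it is the identical computation), and the same choice $\xi_h\equiv 1$ for mass conservation. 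No gaps; your remark that the quadratic form of $h$ is what keeps $h_c(\phi_h^n,c_\delta^{n+1})$ inside $X_h$ is exactly the point the paper also emphasizes.
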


\begin{proof}
The mass conservation property follows immediately by choosing
$\xi_h=1$ in \eqref{3.c}. Next, we choose the test function $\zeta_h=\mu_\delta^{n+1}\in X_h$ in \eqref{3.phi} and $\xi_h=c_\delta^{n+1}+1-\phi_h^n\in X_h$ in \eqref{3.c}. Here, we take advantage of the fact that $h_c(\phi_h^{n},c_\delta^{n+1})=c_\delta^{n+1}+1-\phi_h^n$ is linear in both arguments, which ensures that this function lies in $X_h$. An addition of \eqref{3.phi} and \eqref{3.c} with these test functions yields
\begin{align}
  \frac{1}{\tau}&\int_{\T^d}I^h[(\phi_\delta^{n+1}-\phi_h^n)
  \mu_\delta^{n+1} ]dx
	+ \frac{1}{\tau}\int_{\T^d}I^h[
	(c_\delta^{n+1}-c_h^n)(c_\delta^{n+1}+1-\phi_h^n)]dx \nonumber \\
  &\quad + \sigma\tau \int_{\T^d}I^h[(\mu_\delta^{n+1}-\mu_h^n)
  \mu_\delta^{n+1}] dx \label{3.aux} \\
	&= -\int_{\T^d}m(\phi_h^n)\big(\na\mu_\delta^{n+1}
	-c_h^n\na(c_\delta^{n+1}+1-\phi_h^n)\big)
	\cdot\na\mu_\delta^{n+1} dx \nonumber \\
   &\phantom{xx}+ \int_{\T^d} c_h^nm(\phi_h^n)\big(\na\mu_\delta^{n+1}-c_h^n
	\na(c_\delta^{n+1}+1-\phi_h^n)\big)
	\cdot\na(c_\delta^{n+1}+1-\phi_h^n)dx \nonumber \\
	&\phantom{xx}- \int_{\T^d}g(c_h^n)
	|\na(c_\delta^{n+1}+1-\phi_h^n)|^2 dx \nonumber \\
	&= -\int_{\T^d}m(\phi_h^n)\big|\na\mu_\delta^{n+1}
	-c_h^n\na(c_\delta^{n+1}+1-\phi_h^n)\big|^2 dx
	- \int_{\T^d}g(c_h^n)|\na(c_\delta^{n+1}+1-\phi_h^n)|^2 dx. 
	\nonumber
\end{align}
Finally, we choose the test function $\chi_h=(\phi_\delta^{n+1}-\phi_h^n)/\tau$ in \eqref{3.mu}. Then, with the elementary inequality $\eta'(z)(z-y)\ge \eta(z)-\eta(y)$ for convex functions $\eta$ and any $(y,z)$, the left-hand side of \eqref{3.aux} becomes
\begin{align}\label{3.aux2}
  \frac{1}{\tau}&\int_{\T^d}I^h[\mu_\delta^{n+1}
  (\phi_\delta^{n+1}-\phi_h^n)] dx
  + \frac{1}{\tau}\int_{\T^d}I^h[(c_\delta^{n+1}-c_h^n)
  (c_\delta^{n+1}+1-\phi_h^n)]dx \\
  &\phantom{xx}+ \sigma\tau \int_{\T^d}I^h[(\mu_\delta^{n+1}-\mu_h^n)\mu_\delta^{n+1}] dx
  \nonumber \\
	&= \frac{\eps}{\tau}\int_{\T^d}\na\phi_\delta^{n+1}\cdot
	\na(\phi_\delta^{n+1}-\phi_h^n)dx \nonumber \\
	&\phantom{xx}+ \frac{1}{\tau}\int_{\T^d}I^h\bigg[\bigg(\frac{1}{\eps}
	(f_{1,\delta}(\phi_\delta^{n+1})-f_2(\phi_h^n))
	- c_\delta^{n+1}\bigg)(\phi_\delta^{n+1}-\phi_h^n)\bigg]dx \nonumber \\
	&\phantom{xx}+ \frac{1}{\tau}\int_{\T^d}
	I^h[(c_\delta^{n+1}-c_h^n)(c_\delta^{n+1}+1-\phi_h^n)]dx  
	+ \sigma\tau \int_{\T^d}I^h[(\mu_\delta^{n+1}-\mu_h^n) 
	\mu_\delta^{n+1}] dx \nonumber\\
	&\ge \frac{\eps}{2\tau}\int_{\T^d}\big(|\na\phi_\delta^{n+1}|^2
	-|\na\phi_h^n|^2\big)dx
    +\frac{\sigma\tau}2\int_{\T^d}I^h\bigg[\big(|\mu_\delta^{n+1}|^2
    -|\mu_h^n|^2\big)\bigg]dx \nonumber \\
	&\phantom{xx}+ \frac{1}{\tau\eps}\int_{\T^d}I^h[\big((F_{1,\delta}
	(\phi_\delta^{n+1})-F_{1,\delta}(\phi_h^n)) 
	- (F_2(\phi_\delta^{n+1})-F_2(\phi_h^n))\big)]dx \nonumber \\
	&\phantom{xx}- \frac{1}{\tau}\int_{\T^d} I^h[c_\delta^{n+1}(\phi_\delta^{n+1}-\phi_h^n)]dx
	+ \frac{1}{\tau}\int_{\T^d}
	I^h[(c_\delta^{n+1}-c_h^n)(c_\delta^{n+1}+1-\phi_h^n)]dx. \nonumber
\end{align}
Because of $(c_\delta^{n+1}-c_h^n)c_\delta^{n+1}\ge \frac12((c_\delta^{n+1})^2-(c_h^n)^2)$, the last two integrals can be estimated according to
\begin{align*}
  - \frac{1}{\tau}&\int_{\T^d} I^h[c_\delta^{n+1}
  (\phi_\delta^{n+1}-\phi_h^n)]dx
  + \frac{1}{\tau}\int_{\T^d}I^h[(c_\delta^{n+1}-c_h^n)
  (c_\delta^{n+1}+1-\phi_h^n)]dx \\
  &= \frac{1}{\tau}\int_{\T^d}I^h[\big((c_\delta^{n+1}-c_h^n)
  c_\delta^{n+1} + c_\delta^{n+1}(1-\phi_\delta^{n+1}) 
  - c_h^n(1-\phi_h^n))\bigg]dx \\
  &\ge \frac{1}{\tau}\int_{\T^d}I^h[\big(h(\phi_\delta^{n+1},c_\delta^{n+1})
  - h(\phi_h^n,c_h^n)\big)]dx.
\end{align*}
Therefore, it follows from \eqref{3.aux2} that
\begin{align*}
  \frac{1}{\tau}\int_{\T^d}&I^h[(\phi_\delta^{n+1}-\phi_h^n)
  \mu_\delta^{n+1}] dx 
  + \frac{1}{\tau}\int_{\T^d}I^h[(c_\delta^{n+1}-c_h^n)
  (c_\delta^{n+1}+1-\phi_h^n)]dx \\
  &+ \sigma\tau\int_{\T^d}I^h[(\mu_\delta^{n+1}-\mu_h^n)\mu_\delta^{n+1}]dx 
  \ge \frac1\tau(E_\delta^{n+1}-E_h^n).
\end{align*}
Replacing the left-hand side by \eqref{3.aux} finishes the proof.
\end{proof}

\subsection{Solution to the regularized problem}

The existence of solutions to the regularized system is proved by means of the Brouwer fixed-point theorem applied to a mapping inspired by \cite[Section 4.4]{GKT22}.

\begin{lemma}
Let $0<\delta<1/2$ be suitably small and let $(\phi_h^n,c_h^n,\mu_h^n)\in X_h^3$ be given. Then there exists a solution $(\phi_\delta^{n+1},c_\delta^{n+1},\mu_\delta^{n+1})\in X_h^3$ to \eqref{3.phi}--\eqref{3.mu}.
\end{lemma}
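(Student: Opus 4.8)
The plan is to obtain the solution as a zero of a continuous map on the finite-dimensional space $X_h^3$ and to invoke the standard corollary of Brouwer's fixed-point theorem, drawing the necessary coercivity directly from the energy computation already performed in Lemma~\ref{lem.Edelta}. The only nonlinearity in \eqref{3.phi}--\eqref{3.mu} is the regularized term $f_{1,\delta}(\phi_\delta^{n+1})$ in \eqref{3.mu}, and by construction $f_{1,\delta}$ is continuous, nondecreasing, and globally Lipschitz (with constant depending on $\delta$), while all remaining contributions depend polynomially on the nodal values of $(\phi,c,\mu)$ and linearly on the test functions. Hence the residual map below will be continuous.

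Concretely, I would fix a reference inner product $(\cdot,\cdot)_\ast$ on $X_h^3$ and define $\mathcal{F}\colon X_h^3\to X_h^3$ by Riesz representation: for $u=(\phi,c,\mu)$ and every $w=(\zeta_h,\xi_h,\chi_h)\in X_h^3$, let $(\mathcal{F}(u),w)_\ast = G_1(u;\zeta_h)+G_2(u;\xi_h)+G_3(u;\chi_h)$, where $G_1,G_2,G_3$ denote the residuals (left-hand side minus right-hand side) of \eqref{3.phi}, \eqref{3.c}, \eqref{3.mu}, each linear in its test function. Then $\mathcal{F}(u)=0$ if and only if $u$ solves the regularized scheme, and $\mathcal{F}$ is continuous by the remarks above.

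For the coercivity I would pair $\mathcal{F}(u)$ with the affine test vector $\Lambda(u):=\big(\mu,\ c+1-\phi_h^n,\ -\tau^{-1}(\phi-\phi_h^n)\big)$, which is exactly the triple of test functions used in Lemma~\ref{lem.Edelta}. Repeating that manipulation at the level of an arbitrary (non-solution) $u$ — the mobility terms combine into the nonnegative dissipation, the contributions $\tau^{-1}(\phi-\phi_h^n)\mu$ cancel, and the convexity inequality $\eta'(z)(z-y)\ge\eta(z)-\eta(y)$ is applied to $F_{1,\delta}$ and to $z\mapsto z^2$ — yields
\[
  (\mathcal{F}(u),\Lambda(u))_\ast \ \ge\ \frac1\tau\big(E_\delta[u]-E_h^n\big),
\]
with $E_\delta$ the discrete energy of Lemma~\ref{lem.Edelta} and $E_h^n$ a fixed constant. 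The decisive point is coercivity of $E_\delta$: for $\delta$ small enough, the quadratic growth of $F_{1,\delta}$ at $\pm\infty$ (leading coefficient $\sim(2\delta)^{-1}$) dominates the nonconvex term $-F_2(\phi)=-(\theta_0/2)\phi(\phi-1)$ and, via Young's inequality, the indefinite cross term $c(1-\phi)$ in $h$, while $\tfrac{\eps}{2}|\na\phi|^2$ and $\tfrac{\sigma\tau^2}{2}|\mu|^2$ control $\na\phi$ and $\mu$. Using the equivalence of all norms on $X_h^3$, this gives $E_\delta[u]\ge c_1\|u\|_\ast^2-c_2$ with $c_1>0$; this is precisely where the smallness of $\delta$ is needed.

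Finally, since $\Lambda(u)=Lu+b$ with $L(\phi,c,\mu)=(\mu,c,-\tau^{-1}\phi)$ an invertible linear map and $b$ a fixed element of $X_h^3$, the map $\Lambda$ is an affine bijection and $\|u\|_\ast\to\infty$ iff $\|\Lambda(u)\|_\ast\to\infty$. Setting $v=\Lambda(u)$ and $\mathcal{H}:=\mathcal{F}\circ\Lambda^{-1}$, the displayed estimate reads $(\mathcal{H}(v),v)_\ast\ge\tau^{-1}(E_\delta[\Lambda^{-1}v]-E_h^n)$, which is strictly positive on the sphere $\{\|v\|_\ast=R\}$ for $R$ large. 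The standard consequence of Brouwer's theorem then furnishes $v^\ast$ with $\|v^\ast\|_\ast\le R$ and $\mathcal{H}(v^\ast)=0$, so that $u^\ast=\Lambda^{-1}(v^\ast)$ solves \eqref{3.phi}--\eqref{3.mu}. I expect the main obstacle to be the coercivity estimate for $E_\delta$, namely reproducing the Lemma~\ref{lem.Edelta} computation for general $u$ and absorbing the nonconvex $-F_2$ term together with the indefinite cross term of $h$ into the regularized convex part $F_{1,\delta}$, which is what forces the restriction to suitably small $\delta$; the continuity of $\mathcal{F}$, the affine change of variables, and the invocation of the Brouwer corollary are then routine.
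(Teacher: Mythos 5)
Your proposal is correct and follows essentially the same route as the paper: a residual map on the finite-dimensional space $X_h^3$, an invertible change of variables sending $(\phi,c,\mu)$ to the test triple, a coercivity estimate that forces $\delta$ to be small, and the standard zero-existence corollary of Brouwer's theorem (which the paper proves inline via a contradiction with the retraction map $G_R$). The one noteworthy difference lies in the coercivity step. The paper pairs its map $S$ with the \emph{linear} image $L(\phi_h,c_h,\mu_h)=(\mu_h,c_h,\phi_h/\tau)$, whose third component is $\phi_h/\tau$ rather than $(\phi_h-\phi_h^n)/\tau$; the cancellation of the $\tau^{-1}\mu(\phi-\phi_h^n)$ terms is then incomplete, and the leftover indefinite quadratic terms ($I_2$) and the term $f_{1,\delta}(\phi_h)\phi_h^n$ ($I_{32}$) must be absorbed by Young's inequality and the growth bounds $|f_{1,\delta}(\phi)|\le C\delta^{-1}(1+|\phi|)$, $F_\delta(\phi)\ge C\delta^{-1}\phi^2-C$, so that smallness of $\delta$ is needed to make the coefficient $\frac{C}{\eps\tau}\big(\frac1\delta-1\big)-\frac{C}{\sigma\tau}$ of $\phi_h^2$ positive. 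You instead pair with the \emph{affine} vector $\Lambda(u)=\big(\mu,\,c+1-\phi_h^n,\,-\tau^{-1}(\phi-\phi_h^n)\big)$, i.e.\ exactly the test triple of Lemma \ref{lem.Edelta}; that computation indeed goes through verbatim for non-solutions (all inequalities used there are pointwise nodal inequalities, not properties of solutions), yielding $(\mathcal F(u),\Lambda(u))_*\ge\tau^{-1}\big(E_\delta[u]-E_h^n\big)$, and coercivity reduces to coercivity of $E_\delta$ itself, where $\delta$ small only has to beat $\theta_0$ and the cross term of $h$, independently of $\sigma$ and $\tau$. Your version buys a cleaner and slightly sharper estimate (the lower bound is literally the energy increment, making the link to Lemma \ref{lem.Edelta} transparent), at the harmless cost of an affine rather than linear change of variables, since $\|u\|_*\to\infty$ iff $\|\Lambda(u)\|_*\to\infty$; the remaining ingredients (continuity of $f_{1,\delta}$, Riesz representation, norm equivalence on $X_h^3$) are routine in both proofs.
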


\begin{proof}
We define the inner product
$$
  \langle(\phi_h,c_h,\mu_h),(\zeta_h,\xi_h,\chi_h)\rangle
	:= \int_{\T^d}I^h[(\phi_h\zeta_h + c_h\xi_h + \mu_h\chi_h)]dx
$$
on the Hilbert space $X_h^3$. Let $(\phi_h^n,c_h^n,\mu_h^n)\in X_h^3$ be given and introduce the mapping $S:X_h^3\to X_h^3$ by
\begin{align*}
  \langle & S(\phi_h,c_h,\mu_h),(\zeta_h,\xi_h,\chi_h)\rangle \\
	&= \int_{\T^d}\bigg(I^h\bigg[\frac{1}{\tau}(\phi_h-\phi_h^n)\zeta_h\bigg]
	+ m(\phi_h^n)(\na\mu_h-c_h^n\na(c_h+1-\phi_h^n))\cdot\na\zeta_h \\
    &\phantom{xx}+\sigma\tau I^h[(\mu_h-\mu_h^n)\zeta_h]\bigg)dx \\
	&\phantom{xx}+ \int_{\T^d}\bigg\{\eps\na\phi_h\cdot\na\chi_h + I^h\bigg[\bigg(-\mu_h + \frac{1}{\eps}
	(f_{1,\delta}(\phi_h)-f_2(\phi_h^n))-c_h\bigg)\chi_h\bigg]\bigg\}dx \\
	&\phantom{xx}+ \int_{\T^d}\bigg(I^h\bigg[\frac{1}{\tau}(c_h-c_h^n)(\xi_h+1-\phi_h^n)\bigg] \\
	&\phantom{xx}- c_h^nm(\phi_h^n)\big(\na\mu_h-c_h^n
    \na(c_h+1-\phi_h^n)\big)\cdot\na(\xi_h+1-\phi_h^n) \\
	&\phantom{xx}+ g(c_h^n)\na(c_h+1-\phi_h^n)\cdot\na(\xi_h+1-\phi_h^n)\bigg)dx
\end{align*}
for all $(\zeta_h,\xi_h,\chi_h)\in X_h^3$. A solution to \eqref{3.phi}--\eqref{3.mu} corresponds to a zero of $S$.

Let $L:X_h^3\to X_h^3$ be the linear transformation $L(\phi_h,c_h,\mu_h)=(\mu_h,c_h,\phi_h/\tau)$ with its inverse $L^{-1}(\zeta_h,\xi_h,\chi_h)=(\tau\chi_h,\xi_h,\zeta_h)$. Furthermore, let $R>0$. We suppose by contradiction that the continuous mapping $S\circ L^{-1}$ has no zeros in the ball $B_R:=\{(\zeta_h,\xi_h,\chi_h)\in X_h^3:\|(\zeta_h,\xi_h,\chi_h)\|_2\le R\}$, where $\|(\zeta_h,\xi_h,\chi_h)\|_2^2=\langle(\zeta_h,\xi_h,\chi_h),(\zeta_h,\xi_h,\chi_h)\rangle$. Furthermore, similarly as in \cite[Section 4.4]{GKT22}, we define the continuous mapping $G_R:B_R\to\pa B_R$ by
$$
  G_R(\zeta_h,\xi_h,\chi_h) 
  := -R\frac{(S\circ L^{-1})(\zeta_h,\xi_h,\chi_h)}{
	\|(S\circ L^{-1})(\zeta_h,\xi_h,\chi_h)\|_2} \quad\mbox{for }(\zeta_h,\xi_h,\chi_h)\in B_R.
$$
By Brouwer's fixed-point theorem, there exists a fixed point $(\zeta_h,\xi_h,\chi_h)\in B_R$ of $G_R$ such that $\|G_R(\zeta_h,\xi_h,\chi_h)\|_2=\|(\zeta_h,\xi_h,\chi_h)\|_2=R$. By definition of $L$, there exists $(\phi_h,c_h,\mu_h)\in X_h^3$ such that $L(\phi_h,c_h,\mu_h)=(\zeta_h,\xi_h,\chi_h)$. Then, since $\zeta_h=\mu_h$, $\xi_h=c_h$, and $\chi_h=\phi_h/\tau$,
\begin{align*}
  &\langle S(\phi_h,c_h,\mu_h),(\zeta_h,\xi_h,\chi_h)\rangle \\
	&= \int_{\T^d}\bigg(\frac{1}{\tau}I^h[(\phi_h-\phi_h^n)\mu_h]
	+ m(\phi_h^n)\big(\na\mu_h-c_h^n\na(c_h+1-\phi_h^n)\big)
	\cdot\na\mu_h \\
    &\phantom{xx}+\sigma\tau I^h[(\mu_h-\mu_h^n)\mu_h]\bigg)dx \\
	&\phantom{xx}+ \int_{\T^d}\bigg\{\eps\na\phi_h\cdot\na\frac{\phi_h}{\tau}
	+ I^h\bigg[\bigg(-\mu_h+\frac{1}{\eps}(f_{1,\delta}(\phi_h)
	-f_2(\phi_h^n))-c_h\bigg)\frac{\phi_h}{\tau}\bigg]\bigg\}dx \\
	&\phantom{xx}+ \int_{\T^d}\bigg(\frac{1}{\tau}I^h[(c_h-c_\delta^h)(c_h+1-\phi_h^n)]
	- c_h^nm(\phi_h^n)\big(\na\mu_h \\
    &\phantom{xx}- c_h^n\na(c_h+1-\phi_h^n)\big)
	\cdot\na(c_h+1-\phi_h^n) + g(c_h^n)|\na(c_h+1-\phi_h^n)|^2\bigg)dx \\
	&=: I_1+I_2+I_3,
\end{align*}
where we collect the gradient terms, quadratic expressions in $(\phi_h,c_h,\mu_h)$, and the nonlinear terms:
\begin{align*}
  I_1 &= \int_{\T^d}\bigg(m(\phi_h^n)\big|\na\mu_h - c_h^n\na(c_h+1-\phi_h^n)\big|^2
	+ \frac{\eps}{\tau}|\na\phi_h|^2 + g(c_h^n)|\na(c_h+1-\phi_h^n)|^2 \\
	&\phantom{xx}+\sigma\tau I^h[(\mu_h-\mu_h^n)\mu_h]\bigg)dx, \\
	I_2 &= \int_{\T^d}I^h\bigg[\bigg\{\frac{1}{\tau}(\phi_h-\phi_h^n)\mu_h
	- (\mu_h+c_h)\frac{\phi_h}{\tau}
	+ \frac{1}{\tau}(c_h-c_\delta^h)(c_h+1-\phi_h^n)\bigg\}\bigg]dx, \\
	I_3 &= \frac{1}{\eps}\int_{\R^d}I^h\bigg[(f_{1,\delta}(\phi_h)-f_2(\phi_h^n))\frac{\phi_h}{\tau}\bigg] dx.
\end{align*}
It follows from Young's inequality that
\begin{align*}
  I_1 &\ge \frac{\eps}{\tau}\int_{\T^d}|\na\phi_h|^2 dx
	+ \int_{\T^d}m(\phi_h^n)\big|\na\mu_h - c_h^n\na(c_h+1-\phi_h^n)\big|^2 dx \\
	&\phantom{xx}
	+ \int_{\T^d}g(c_h^n)|\na(c_h+1-\phi_h^n)|^2dx
    + \frac{\sigma\tau}2\int_{\T^d}I^h[|\mu_h|^2]dx
    -\frac{\sigma\tau}2\int_{\T^d}I^h[|\mu_h^n|^2]dx, \\
  I_2 &\ge \int_{\T^d}I^h\bigg[\bigg(\frac{c_h^2}{2\tau} -\frac{\sigma\tau}4|\mu_h|^2\bigg) \bigg]dx
	- \frac{C}{\sigma\tau}\int_{\T^d}I^h\bigg[\bigg(\phi_h^2
	+ \frac{1}{\tau^2}|\phi_h^n|^2 + |c_h^n|^2+1\bigg)\bigg]dx,
\end{align*}
where $C>0$ does not depend on $\delta$.
We write the remaining term $I_3$ as $I_3=\int_{\T^d}I^h[I_{31}+I_{32}]dx$, where
\begin{align*}
  I_{31} = \frac{1}{\eps\tau}(f_{1,\delta}(\phi_h)-f_2(\phi_h^n))
  (\phi_h-\phi_h^n), \quad
  I_{32} = \frac{1}{\eps\tau}(f_{1,\delta}(\phi_h)-f_2(\phi_h^n))\phi_h^n.
\end{align*}
We use the properties
\begin{equation*}%\label{3.prop}
\begin{aligned}
  |f_{1,\delta}(\phi_h)|&\le C\delta^{-1} (1+|\phi_h|), &\quad
  \quad |f_2(\phi_\delta^h)|&\le C(1+|\phi_h^n|), \\
  F_\delta(\phi_h^n) &\le C\delta^{-1}(1+|\phi_h^n|^2), &\quad
  F_\delta(\phi_h)&\ge C\delta^{-1}\phi_h^2 - C,
\end{aligned}
\end{equation*}
where $C>0$ does not depend on $\delta$ and $F_\delta:=F_{1,\delta}-F_2$.
Then, by the convexity of $f_{1,\delta}$ and $f_2$,
\begin{align*}
  I_{31} \ge \frac{1}{\eps\tau}(F_\delta(\phi_h)-F_\delta(\phi_h^n))
	\ge \frac{C}{\delta \eps\tau}\phi_h^2 - \frac{C}{\delta\eps\tau}(1+|\phi_h^n|^2).
\end{align*}
Finally, we deduce from
\begin{align*}
  I_{32} &\ge \frac{1}{\eps\tau}f_{1,\delta}(\phi_h)\phi_h^n
  - \frac{C}{\eps\tau}((\phi_h^n)^2+1)
	\ge -\frac{C}{\eps\tau}\left(\phi_h^2 +\left(1+\frac1{\delta^2}\right) |\phi_h^n|^2 + 1\right),
\end{align*}
where $C>0$ does not depend on $\delta$, that for sufficiently small $\delta>0$,
\begin{align*}
  I_3	\ge \frac{C}{\eps\tau}\left(\frac1{\delta}-1\right)\int_{\T^d}\phi_h^2 dx
	-\frac{ C}{\eps\tau}\int_{\T^d}\bigg(
	\bigg(1+\frac1\delta+\frac1{\delta^2}\bigg)|\phi_h^n|^2 + 1\bigg)dx.
\end{align*}
Summarizing the previous estimates, we find that
\begin{align*}
  \langle & S(\phi_h,c_h,\mu_h),(\zeta_h,\xi_h,\chi_h)\rangle
	\ge \int_{\T^d}\bigg\{\frac{\eps}{2\tau}|\na\phi_h|^2
	+ m(\phi_h^n)\big|\na\mu_h-c_h^n
	\na(c_h+1-\phi_h^n)\big|^2 \\
	&\phantom{xx} + g(c_h^n)|\na(c_h+1-\phi_h^n)|^2
	+I^h\bigg[\bigg(\frac{C}{\eps\tau}\bigg(\frac1{\delta}-1\bigg)
	- \frac{C}{\sigma\tau}\bigg)\phi_h^2 + \frac{c_h^2}{2\tau} 
	+ \frac{\sigma\tau}4\mu_h^2-C^n\bigg]\bigg\} dx,
\end{align*}
where $C^n>0$ depends on $(\phi_h^n,c_h^n,\mu_h^n)$, $\delta$, $\sigma$, and $\tau$ but not on $(\phi_h,c_h,\mu_h)$. For any fixed $\sigma>0$ and $\tau>0$, choosing $\delta$ sufficiently small, we have
$$
  \langle S(\phi_h,c_h,\mu_h),(\zeta_h,\xi_h,\chi_h)\rangle
	\ge C\|(\phi_h,c_h,\mu_h)\|_2^2 - C^n.
$$
Since the norms $\|L(\cdot)\|_2$ and $\|\cdot\|_2$ on a finite-dimensional space are equivalent, i.e.\
\begin{align*}
	C''\|{L}(\phi_h,c_h,\mu_h)\|_2\ge \|(\phi_h,c_h,\mu_h)\|_2  
  \ge C'\|{L}(\phi_h,c_h,\mu_h)\|_2= C'\|(\zeta_h,\xi_h,\chi_h)\|_2
   = C'R
\end{align*}
for some positive constants $C'$ and $C^{''}$ independent of $\delta$,
 for sufficiently large $R>0$,
\begin{align}\label{3.pos}
  \langle S(\phi_h,c_h,\mu_h),(\mu_h,c_h,\phi_h/\tau)\rangle
  &= \langle S(\phi_h,c_h,\mu_h),(\zeta_h,\xi_h,\chi_h)\rangle \\
	&\ge C {R^2}-C^n > 0. \nonumber
\end{align}
On the other hand, since $(\zeta_h,\xi_h,\chi_h)=L(\phi_h,c_h,\mu_h)\in B_R$ is a fixed point of $G_R$, we have
$$
  G_R(L(\phi_h,c_h,\mu_h)) = -R\frac{S(\phi_h,c_h,\mu_h)}{\|S(\phi_h,c_h,\mu_h)\|_2}
$$
and therefore,
\begin{align*}
  \langle &S(\phi_h,c_h,\mu_h),(\mu_h,c_h,\phi_h/\tau)\rangle \\
	&= -\frac{1}{R}\|S(\phi_h,c_h,\mu_h)\|_2\big\langle G_R(L(\phi_h,c_h,\mu_h)),
	(\mu_h,c_h,\phi_h/\tau)\big\rangle \\
	&= -\frac{1}{R}\|S(\phi_h,c_h,\mu_h)\|_2\big\langle L(\phi_h,c_h,\mu_h),
	(\mu_h,c_h,\phi_h/\tau)\big\rangle \\
	&= -\frac{1}{R}\|S(\phi_h,c_h,\mu_h)\|_2\big\langle (\mu_h,c_h,\phi_h/\tau),
	(\mu_h,c_h,\phi_h/\tau)\big\rangle \\
	&= -\frac{1}{R}\|S(\phi_h,c_h,\mu_h)\|_2\int_{\mathbb{T}^d}I^h[\mu_h^2+c_h^2+\phi_h^2/\tau^2]dx \le 0.
\end{align*}
This inequality contradicts \eqref{3.pos}. We conclude that, if $R>0$ is sufficiently large, the mapping $S\circ L^{-1}$ has a zero in $B_R$, which guarantees the existence of a solution to the regularized scheme.
\end{proof}

\subsection{Uniform bounds}

The energy inequality in Lemma \ref{lem.Edelta} implies that $(\na\phi_\delta^{n+1})_\delta$ is bounded in $L^2(\T^d)$ and $(\phi_\delta^{n+1})_\delta$ is bounded in $L^1(\T^d)$. By the Poincar\'e--Wirtinger inequality, $(\phi_\delta^{n+1})_\delta$ is bounded in $H^1(\T^d)$. Furthermore, the energy inequality provides a bound for $(c_\delta^{n+1})_\delta$ in $L^2(\T^d)$ and, because of the stabilization, of $(\mu_\delta^{n+1})_\delta$ in $L^2(\T^d)$. 

For the limit $\delta\to 0$, we need gradient bounds for $(\mu_\delta^{n+1})_\delta$, $(c_\delta^{n+1})_\delta$ and integral bound for $I^h[|f_{1,\delta}(\phi_\delta^{n+1})|^2]$, which do not directly follow from our approach because of the degeneracies and the linearized scheme. Therefore, for the first and the second terms, we use the inverse inequality \cite[(2.4),(2.5)]{BBH2000}
\begin{align}\label{4.prop}
  \|v\|_{L^2(\T^d)} \le Ch^{-d/2}\|v\|_{L^1(\T^d)}, \quad
  \|v_h\|_{H^1(\T^d)} \le Ch^{-1}\|v_h\|_{L^2(\T^d)}
  \quad\mbox{for }v_h\in X_h,
\end{align}
which yields $\delta$-uniform bounds for $(\mu_\delta^{n+1})$, $(c_\delta^{n+1})$ in $H^1(\T^d)$. %and $\delta$-uniform bounds for $(c_h^n)$, $(\phi_h^n)$ in $L^\infty(\T^d)$.

{It remains to establish a uniform bound for $\int_{\T^d}I^h[|f_{1,\delta}(\phi_\delta^{n+1})|^2]dx$ with respect to $\delta$. Choosing $\chi_h=I^h[f_{1,\delta}(\phi_\delta^{n+1})]$ in \eqref{3.mu} and rearranging the terms shows that
\begin{align*}
  \int_{\T^d}&I^h[f_{1,\delta}(\phi_\delta^{n+1})I^h[f_{1,\delta}(\phi_\delta^{n+1})]]dx
	= -\eps\int_{\T^d}\na\phi_\delta^{n+1}\cdot\na I^h[f_{1,\delta}(\phi_\delta^{n+1})]dx\\
	&+ \eps\int_{\T^d}I^h[\mu_\delta^{n+1}I^h[f_{1,\delta}(\phi_\delta^{n+1})]]dx + \int_{\T^d}I^h[(f_2(\phi_\delta^n)+c_\delta^{n+1})
	I^h[f_{1,\delta}(\phi_\delta^{n+1})]]dx.
\end{align*}
The left-hand side can be rewritten, leading to
\begin{align}
  \int_{\T^d}&I^h[|f_{1,\delta}(\phi_\delta^{n+1})|^2]dx
	= -\eps\int_{\T^d}\na\phi_\delta^{n+1}\cdot\na I^h[f_{1,\delta}(\phi_\delta^{n+1})]dx \nonumber\\
	&+ \eps\int_{\T^d}I^h[\mu_\delta^{n+1}f_{1,\delta}(\phi_\delta^{n+1})]dx + \int_{\T^d}I^h[(f_2(\phi_\delta^n)+c_\delta^{n+1})
	f_{1,\delta}(\phi_\delta^{n+1})]dx.\label{f-d-1}
\end{align}}

{The discrete energy estimate and definition \eqref{func.f} imply a lower and upper bound for $\phi_\delta$, yielding
\begin{equation*}
  \int_{\mathbb{T}^d}I_h[(\phi_\delta^{n+1})_-^2]dx
  +\int_{\mathbb{T}^d}I_h[(\phi_\delta^{n+1}-1)_+^2]dx\le C\delta,
\end{equation*}
which, by the inverse inequalities \eqref{4.prop}, shows that
\begin{equation*}%\label{prop.phi}
  \|(\phi_\delta^{n+1})_-\|_{L^\infty(\T^d)}
  +\|(\phi_\delta^{n+1}-1)_+\|_{L^\infty(\T^d)} \le C\delta^{1/2}.
\end{equation*}
We deduce from this bound as well as from definitions \eqref{lem.Edelta} and \eqref{func.f} that
\begin{equation}\label{prop.phi.2}
	\| F_{1,\delta}(\phi_\delta^{n+1})\|_{L^\infty(\T^d)}\le C.
\end{equation}}

{Now we prove (similar to \cite{BBH2000,JFC1975}) the uniform estimate
\begin{align}\label{con.2}
	-\eps\int_{\mathbb{T}^d}\na\phi_\delta^{n+1}\cdot\na I^h[f_{1,\delta}(\phi_\delta^{n+1})]dx\le C.
\end{align}
It follows from \eqref{prop.phi.2} for any simplex $\kappa\in\mathcal{T}^h$ that
\begin{align*}
	-\eps\int_{\kappa}&\na\phi_\delta^{n+1}\cdot\na I^h[f_{1,\delta}(\phi_\delta^{n+1})]dx\\
	&=-\eps\sum_{i,j\in J}\phi_\delta^{n+1}(x_i)f_{1,\delta}(\phi_\delta^{n+1}(x_j))\int_{\kappa}\na p_i(x)\cdot\na p_j(x)dx\\
	&=-\eps\sum_{i,j\in J}(\phi_\delta^{n+1}(x_i)-\phi_\delta^{n+1}(x_j))f_{1,\delta}(\phi_\delta^{n+1}(x_j))\int_{\kappa}\na p_i(x)\cdot\na p_j(x)dx\\
	&\le-\eps\sum_{i,j\in J}(F_{1,\delta}(\phi_\delta^{n+1}(x_i))-F_{1,\delta}(\phi_\delta^{n+1}(x_j)))\int_{\kappa}\na p_i(x)\cdot\na p_j(x)dx \le C.
\end{align*}
Here, we use \eqref{prop.phi.2} and the convexity of $F_{1,\delta}$.
This shows \eqref{con.2} after summing over $\kappa\in\mathcal{T}^h$.}

{Thus, according to the uniform $L^2(\T^d)$ bound for $(\phi_\delta^n, c_\delta^{n+1}, \mu_\delta^{n+1})_\delta$, definition $f_2(\phi)=\theta_0\phi-\theta_0/2$, and Young's inequality, it follows from \eqref{f-d-1} and \eqref{con.2} that
\begin{align}
  \int_{\T^d}I^h[|f_{1,\delta}(\phi_\delta^{n+1})|^2]dx\le C.\label{f-d-2}
\end{align}}

\subsection{Limit $\delta\to 0$}

We have shown that $(\phi_\delta^{n+1})$, $(c_\delta^{n+1})$, and $(\mu_\delta^{n+1})$ are bounded in $H^1({\T^d})$ (with respect to $\delta$). Hence, there exist subsequences which are not relabeled such that, as $\delta\to 0$,
\begin{align*}
  & \na\phi_\delta^{n+1}\rightharpoonup \na\phi_h^{n+1},
  \quad\na c_\delta^{n+1}\rightharpoonup\na c_h^{n+1},
  \quad\na\mu_\delta^{n+1}\rightharpoonup\na\mu_h^{n+1} \quad
  \mbox{weakly in }L^2(\T^d).
\end{align*} 
Since $h_c(\phi,c)=c+1-\phi$ and $h_\phi(\phi,c)=-c$ are linear, we have
\begin{align*}
  \na h_c(\phi_h^n,c_\delta^{n+1})\rightharpoonup\na h_c(\phi_h^n,c_h^n),
  \quad h_\phi(\phi_h^n,c_\delta^{n+1})\rightharpoonup h_\phi(\phi_h^n,c_h^n)
  \quad\mbox{weakly in }L^2(\T^d).
\end{align*}
(In fact, there is a subsequence such that $(h_\phi(\phi_h^n,c_\delta^{n+1}))$ converges even strongly in $L^2(\T^d)$.) These convergences are sufficient to pass to the limit $\delta\to 0$ in the linear equations \eqref{3.phi}--\eqref{3.c}. The difficult part is the limit in the remaining equation \eqref{3.mu} with the nonlinear term $f_{1,\delta}(\phi_\delta^{n+1})$.

{By \eqref{f-d-2}, there exists $g_h^{n+1}(x)\in X_h$ such that $I^h[f_{1,\delta}(\phi_\delta^{n+1})](x)\to g_h^{n+1}(x)$ on $\T^d$ as $\delta\to 0$ and $f_{1,\delta}(\phi_\delta^{n+1}(x_j))$ is uniformly bounded in $\delta$. Also, using $\phi_\delta^{n+1}(x_j)\to \phi_h^{n+1}(x_j)$ for all $j\in J$ as $\delta\to 0$,  and the fact that, for all $s\in (-\infty,\infty)$, $[f_{1,\delta}]^{-1}(s)\to [f_{1}]^{-1}(s)$ as $\delta\to 0$, where $\phi_\delta^{n+1}=[f_{1,\delta}]^{-1}(s)$ and $\phi_h^{n+1}=[f_{1}]^{-1}(s)$ are the inverse functions of $s=f_{1,\delta}(\phi_\delta^{n+1}):(-\infty,\infty)\mapsto (-\infty,\infty)$ and $s=f_1(\phi_h^{n+1}):(0,1)\mapsto (-\infty,\infty)$, we have $\phi^{n+1}_h(x_j)=[f_{1}]^{-1}(g_h^{n+1}(x_j))$ and therefore $g_h^{n+1}(x_j)=f_1(\phi_h^{n+1}(x_j))$ for all $j\in J$. Hence, we have that 
\begin{align}
	\int_{\T^d}&I^h[|f_{1}(\phi_h^{n+1})|^2]dx\le C, \label{f-d-3}
\end{align}
which immediately implies that  $0<\phi_h^{n+1}(x_j)<1$ for all $j\in J$ and $0<\phi^{n+1}_h(x)<1$ on $\T^d$.} 
Consequently, for any $\chi_h\in X_h\subset L^2(\T^d)$, when $\delta\to 0$,  $$ \int_{\T^d}I^h[f_{1,\delta}(\phi_\delta^{n+1})\chi_h] dx \to\int_{\T^d} I^h[f_1(\phi_h^{n+1})\chi_h]dx.$$

% {The $L^1(\T^d)$ bound for $(f_{1,\delta}(\phi_\delta^{n+1}))_\delta$ allows us to infer an $L^2(\T^d)$ bound for $(f_{1,\delta}(\phi_\delta^{n+1}))_\delta$. This bound is independent of $\delta$ but it generally depends on the mesh size $h$. We infer from the a.e.\ convergence that
% $f_{1,\delta}(\phi_\delta^{n+1})\rightharpoonup f_1(\phi_h^{n+1})$ weakly in $L^2(\T^d)$ as $\delta\to 0$ (see \eqref{4.prop}).}

% As linear bounded operators are weakly continuous, we have $P_{X_h}f_{1,\delta}(\phi_\delta^{n+1})\rightharpoonup P_{X_h} f_1(\phi_h^{n+1})$ weakly in $L^2(\T^d)$.

The previous convergences are sufficient to pass to the limit $\delta\to 0$ in \eqref{3.mu}, and we conclude that $(\phi_h^{n+1},c_h^{n+1},\mu_h^{n+1})$ is a weak solution to \eqref{2.phi}--\eqref{2.mu}. Moreover, by the lower semicontinuity of the gradient $L^2({\T^d})$ norm, we can perform the limit $\delta\to 0$ in the approximate energy inequality in Lemma \ref{lem.Edelta}, which gives \eqref{2.Eineq} and finishes the proof of Theorem \ref{thm.ex}.

%%%%%%%%%%%%%%%%%%%%%%%%%%%%%%%%%%%%%%%%%%%%%%%%%%%%%%%%%%%%%%%%%%%

\section{Numerical experiments}\label{sec.num}

We illustrate the dynamical behavior of the solutions to our model.
To this end, we solve the regularized scheme \eqref{3.phi}--\eqref{3.mu}, replacing $(\phi_h^n,c_h^n)$ by $(\phi_\delta^n,c_\delta^n)$, implemented in FreeFem \cite{Hec12}. Equation \eqref{3.mu} is solved by using the Newton method. It is possible to use the unregularized system \eqref{2.phi}--\eqref{2.mu} if the initial data lies in the interval $(\eta,1-\eta)$ for some small $\eta>0$, and the numerical results for $\delta=0$ and $\delta=10^{-3}$ are basically the same. The spatial domain is the rectangle $\T^2=(0,2\pi)\times(0,2\pi)$, discretized by a uniform triangular mesh of $60\times 60$ triangles. The physical and numerical parameters are 
$$
  \delta = 10^{-3}, \quad \eps=0.15, \quad \sigma = 0.1, \quad
  \theta_0 = 7, \quad \tau = 10^{-3},
$$ 
if not stated otherwise. Chen et al.\ \cite{CWWW19} have used the value $\theta_0=3$ for the Cahn--Hilliard equation with a Flory--Huggins-type potential, which motivates our choice of $\theta_0$. Furthermore, we choose the function $g(c)=c^2$, which satisfies the conditions stated in Theorem \ref{thm.ex}.

\subsection{Convergence, energy inequality, and physical bounds}\label{sec.num1}

In the first example, we compute numerically the temporal convergence rate using the initial data $\phi^0(x) = 0.08\,\mbox{rand}+0.2$ and $c^0(x) = 0.1\,\mbox{rand} + 0.4$, where rand yields a uniformly distributed random number in the interval $[0,1]$ (we have used the command {\tt randreal1} in FreeFem); see Figure \ref{fig.init}. 
The reference solution is computed with the time step size $\tau=10^{-4}$. The numerical errors in the $L^2(\T^2)$ norm (more precisely, we used the command {\tt int2D} of FreeFem) at time $T=0.064$ are shown in Table \ref{table}. We confirm the first-order convergence rate when the time step sizes are not too large.

\begin{figure}[ht]
	\includegraphics[width=60mm]{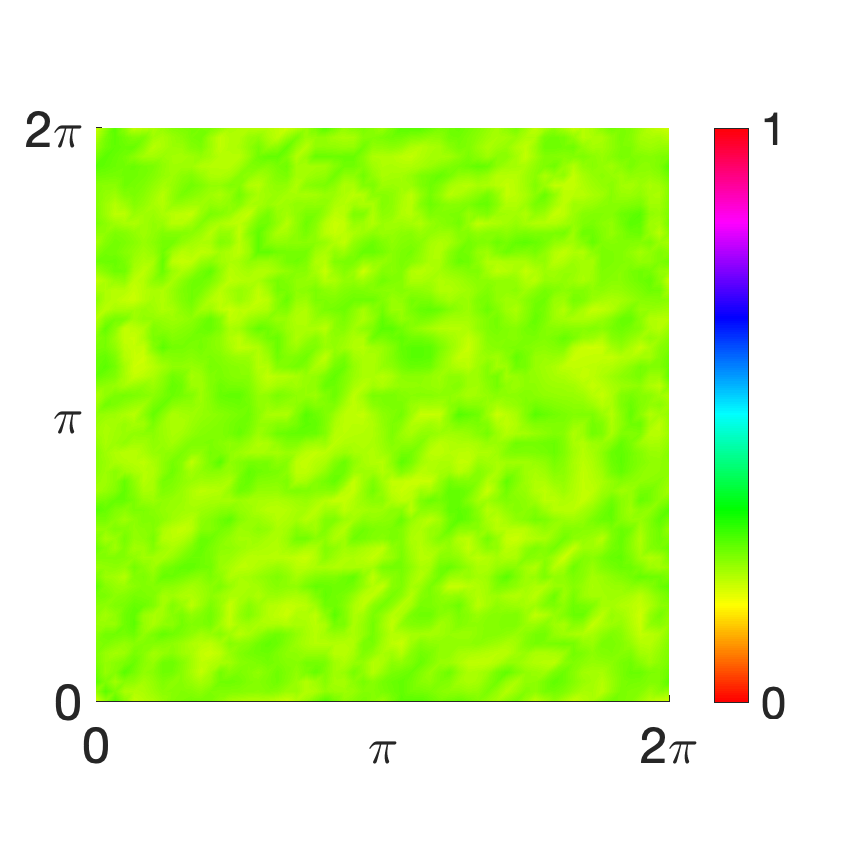}
	\includegraphics[width=60mm]{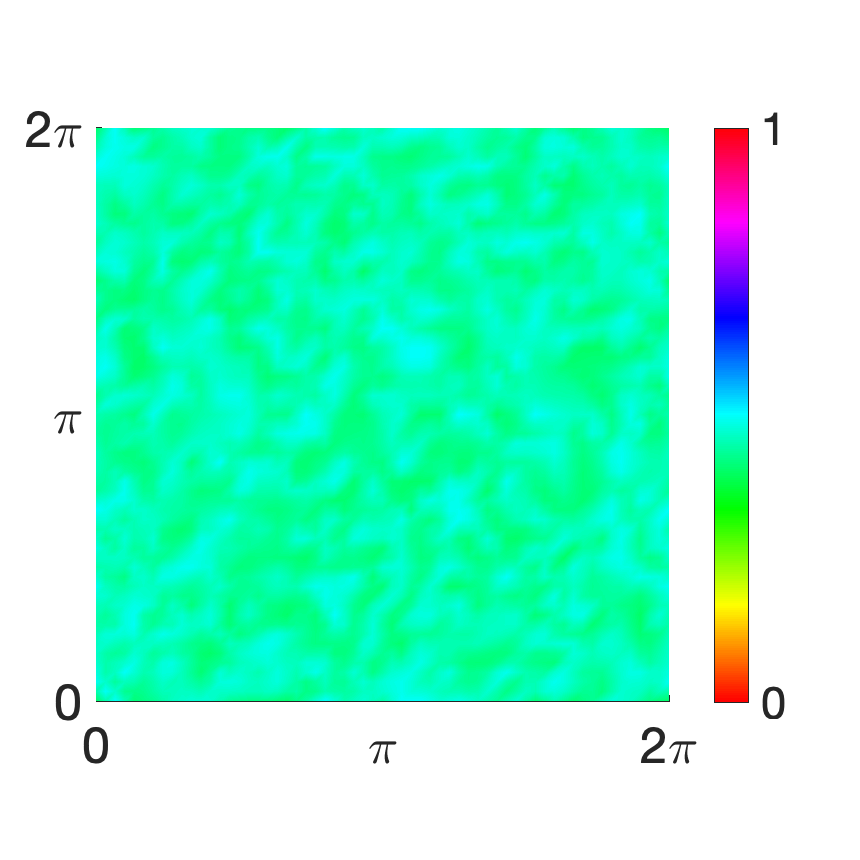}
	\caption{Initial data $\phi^0$ (left) and $c^0$ (right) used in Section \ref{sec.num1}.}
	\label{fig.init}
\end{figure}

\begin{table}
\begin{tabular}{p{20mm}cccc}
\hline
time step & $\|\phi-\phi_{\rm ref}\|_{L^2}$ & rate & $\|c-c_{\rm ref}\|_{L^2}$ & rate \\
\hline
0.0064 & 1.15e-02 & & 1.24e-02 &  \\
0.0032 & 6.31e-03 & 0.86 & 6.52e-03 & 0.92 \\
0.0016 & 3.25e-03 & 0.95 & 3.26e-03 & 1.00 \\
0.0008 & 1.57e-03 & 1.05 & 1.55e-03 & 1.08 \\
0.0004 & 6.86e-04 & 1.19 & 6.68e-04 & 1.21 \\
0.0002 & 2.31e-04 & 1.57 & 2.23e-04 & 1.58 \\
\hline
\end{tabular}
\caption{Temporal convergence rate.}
\label{table}
\end{table}

We discuss the minimal values of the numerical concentrations $c_h$ for various spatial mesh sizes. We choose a small perturbation for the initial data, namely $\phi^0(x) = 0.08\,\mbox{rand}+0.2$ and $c^0(x) = 0.001\,\mbox{rand}$. Recall that rand yields random numbers from the interval $[0,1]$, so the initial concentration is nonnegative. The numerical solutions are solved with time step size $\tau = 10^{-4}$ and on three uniform triangular meshes of $30\times 30$, $60\times 60$, and $90\times 90$ triangles. The minimal values of $c_h$ are presented in Table \ref{table2}. We observe that in principle the numerical concentration becomes positive for sufficiently small mesh sizes. Since $c^0\ge 0$ only, the analytical results of \cite{JuUn05} predict that $c_h\ge -Ch^\alpha$ for some $\alpha>1$. This explains the negative values in the last row of Table \ref{table2}. Thus, although our scheme does not preserve the nonnegativity of the concentrations in general, the minimal values of $c_h$ are under control. In applications, the solute concentration is usually not close to zero such that we do not expect negative values for $c_h$ in the corresponding numerical simulations. The test presented here illustrates an extreme case.

\begin{table}
	\begin{tabular}{p{20mm}rrr}
	\hline
	time/mesh & $30\times30$ & $60\times60$ & $90\times90$\\
	\hline
	$t = 0.2$ & $-1.44$e-05 &  1.29e-06 & 1.68e-06 \\
	$t = 0.4$ & $-4.02$e-05 & $-1.21$e-09 & 1.55e-06 \\
	$t = 0.6$ & $-1.41$e-04 & $-8.53$e-06 & $-1.49$e-06 \\
	\hline
	\end{tabular}
	\caption{Minimal values of $c_h$ at various times and for various spatial mesh sizes.}
	\label{table2}
\end{table}

Next, we illustrate how the energy decay and extreme values of $\phi$ are influenced by variations of the parameter $\delta$. The initial data is $\phi^0(x) = 0.08\,\mbox{rand}+0.2$, $ c^0(x) = 0.1\,\mbox{rand} + 0.4$. Figure \ref{fig.E.tau} (left) shows that the discrete energy values for various time steps are decreasing, however, not with a uniform rate. The extremal values $\min\phi_h^n$ and $\max\phi_h^n$ at times $t=0,\ldots,1$ in Figure \ref{fig.E.tau} (right) preserve the physical bounds for the fiber phase. In Figure \ref{fig.E.delta}, we present the energy decay and extremal values of $\phi_h^n$ for various values of $\delta$. The initial data is $\phi^0(x) = 0.6\,\mbox{rand}+0.2$, $ c^0(x) = 0.1\,\mbox{rand} + 0.4$.
The scheme is energy stable for all values of $\delta$, but the extremal values of $\phi_h^n$ lie in the interval $(0,1)$ only if $\delta$ is sufficiently small. If $\delta=0.1$, the extremal values become larger than one.

\begin{figure}[ht]
\includegraphics[width=60mm]{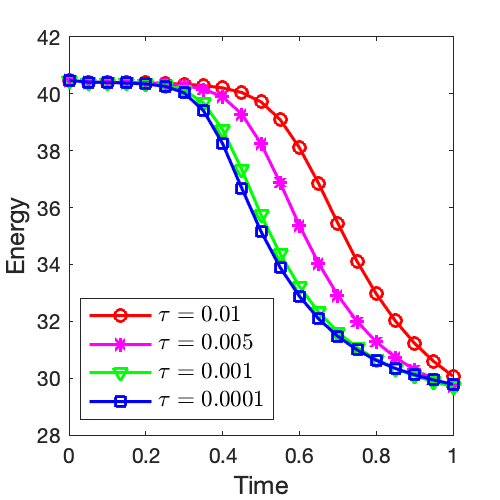}
\includegraphics[width=60mm]{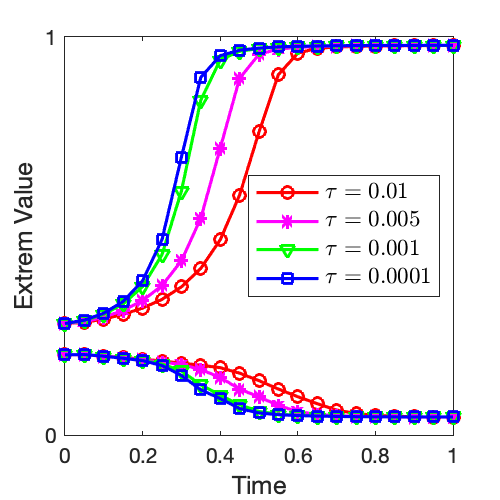}
\caption{Discrete energy (left) and extreme values $\min\phi_h^n$ and $\max\phi_h^n$ (right) for various values of the time step $\tau$.}
\label{fig.E.tau}
\end{figure}

\begin{figure}[ht]
\includegraphics[width=60mm]{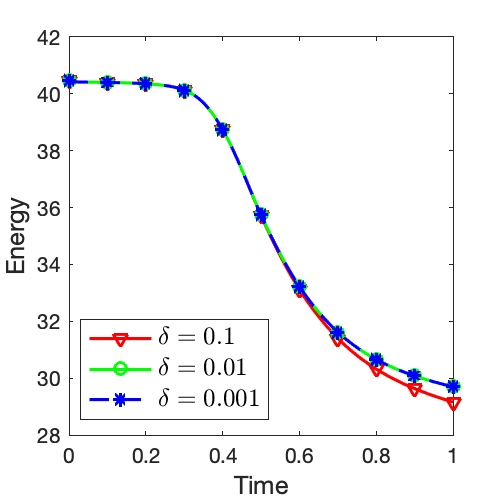}
\includegraphics[width=60mm]{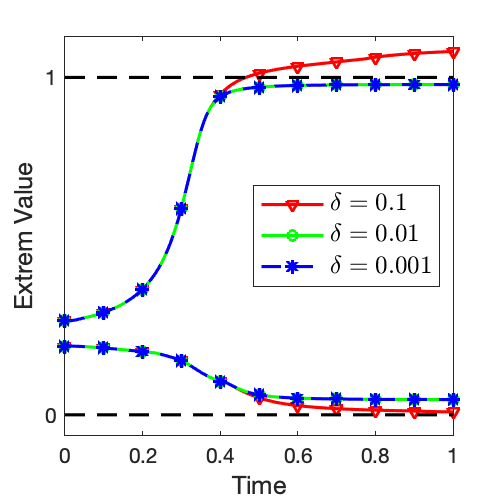}
\caption{Discrete energy (left) and extreme values $\min\phi_h^n$ and $\max\phi_h^n$ (right) for various values of the regularization parameter $\delta$.}
\label{fig.E.delta}
\end{figure}

We also illustrate the influence of $\theta_0$ on the dynamics in Figure \ref{fig.E.theta}. For the chosen parameters, the physical bounds $0\le\phi_\delta^n\le 1$ and $c_\delta^n\ge 0$ are satisfied. 

\begin{figure}[ht]
	\includegraphics[width=60mm]{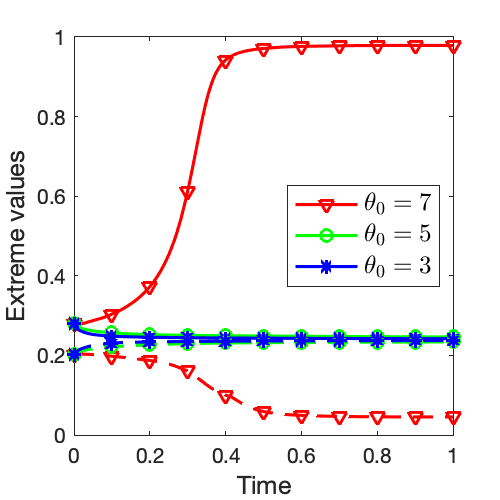}
	\includegraphics[width=60mm]{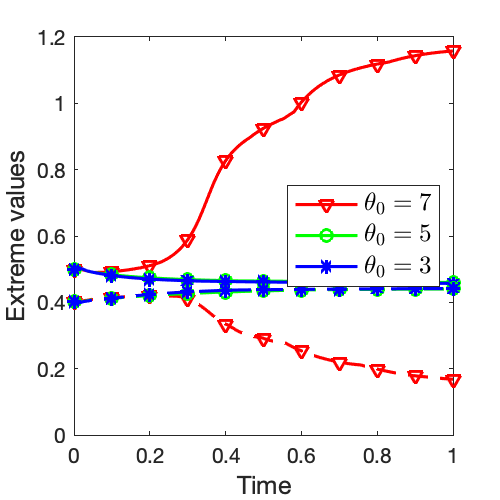}
	\caption{Extreme values $\min\phi_h^n$ and $\max\phi_h^n$  (left) and extreme values $\min c_h^n$ and $\max c_h^n$
	(right) for various values of $\theta_0$ with $\delta=10^{-3}$.}
	\label{fig.E.theta}
\end{figure}

\subsection{Phase separation}

We present simulations of phase separation in $\T^2$ with the initial data $\phi^0(x) = 0.08\,\mbox{rand}+0.2$, $ c^0(x) = 0.1\,\mbox{rand} + 0.4$ and the time step size $\tau= 10^{-3}$. Recall that $\delta=10^{-3}$, $\eps=0.15$, and $\theta_0=7$. 
Snapshots for the fiber phase $\phi$ and nutrient concentration $c$ are shown in Figures \ref{fig.phi} and \ref{fig.c}. It was shown in \cite[Section 4]{RoFo08} that a reduced equation, derived from a long-wavelength reduction, shows a hexagonal-like pattern, which is confirmed by our numerical experiments for the full model.

\begin{figure}[ht]
	\includegraphics[width=52mm]{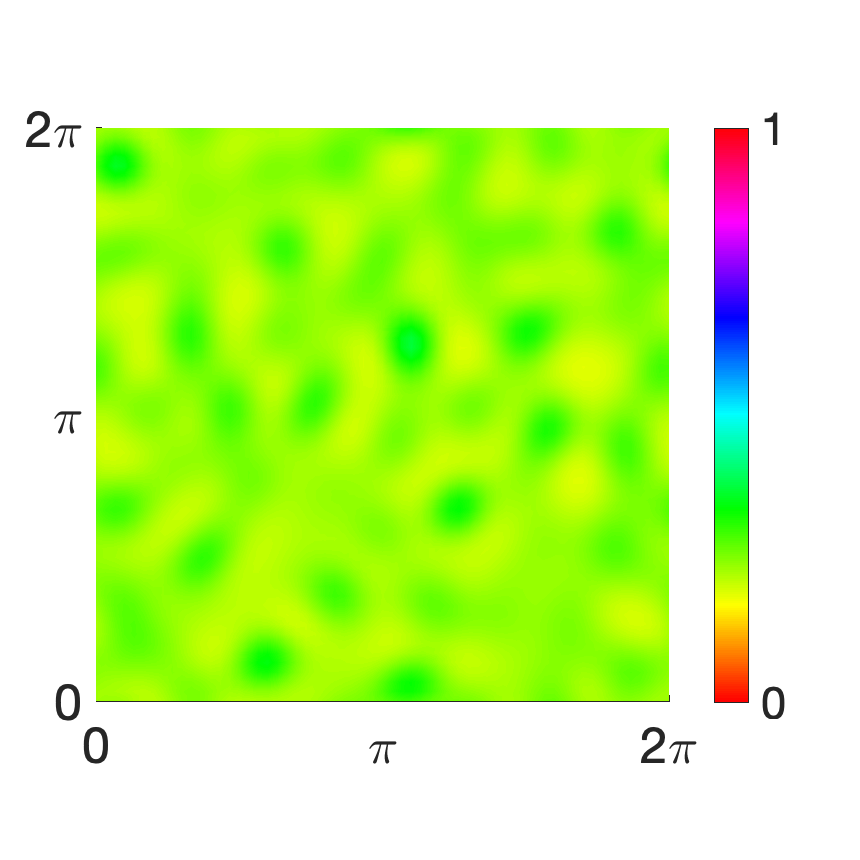}
	\includegraphics[width=52mm]{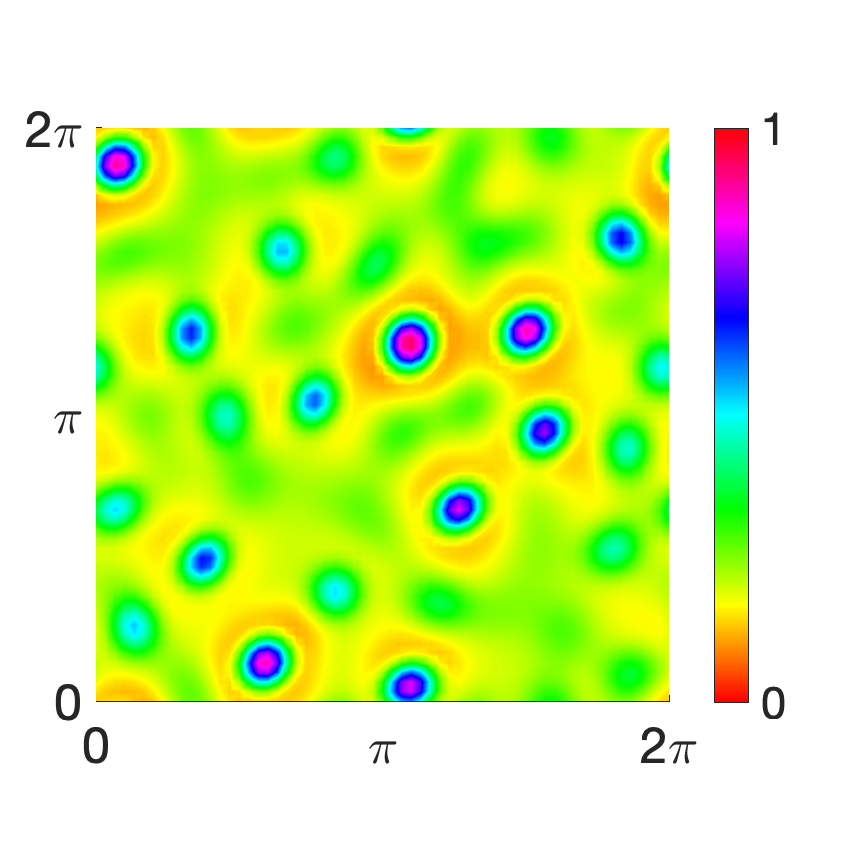}
	\includegraphics[width=52mm]{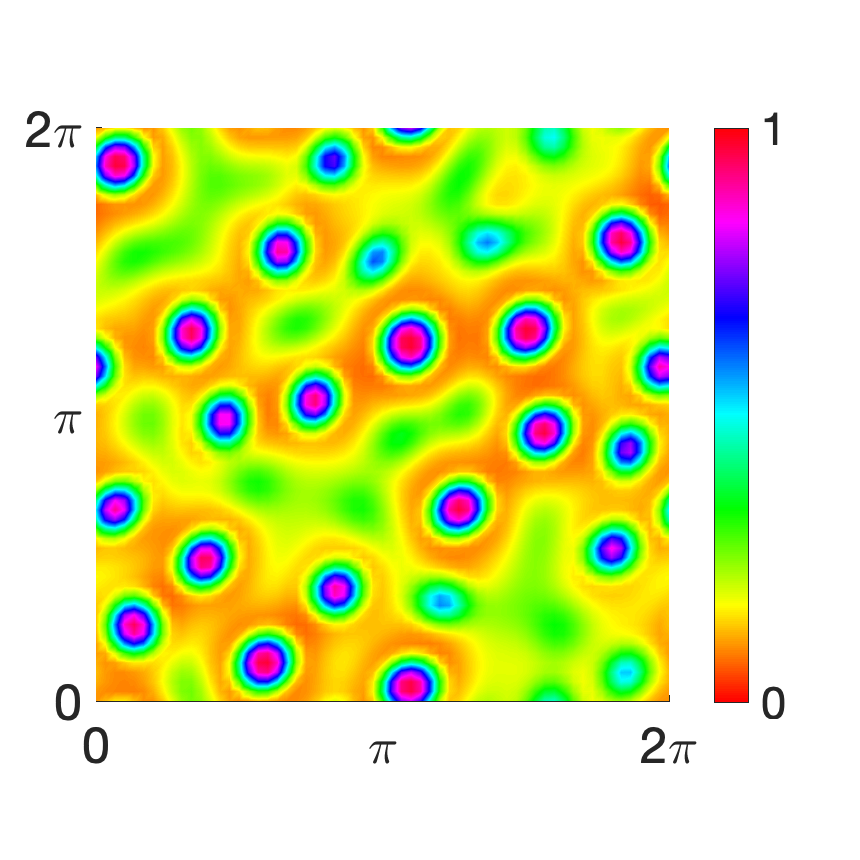}\\[-5mm]
	\includegraphics[width=52mm]{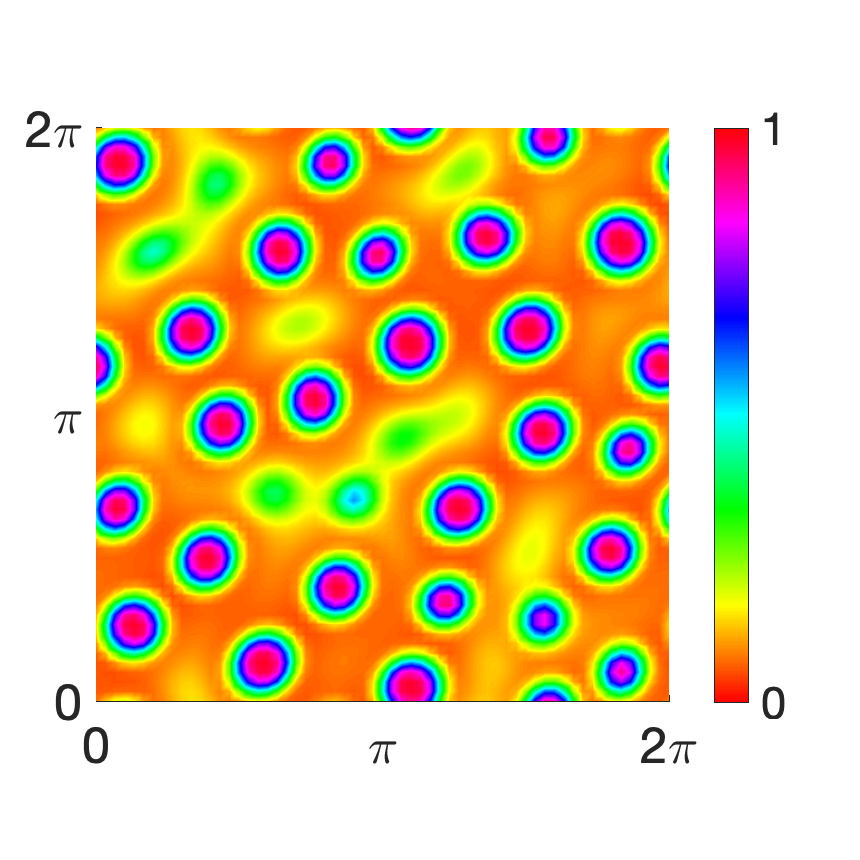}
	\includegraphics[width=52mm]{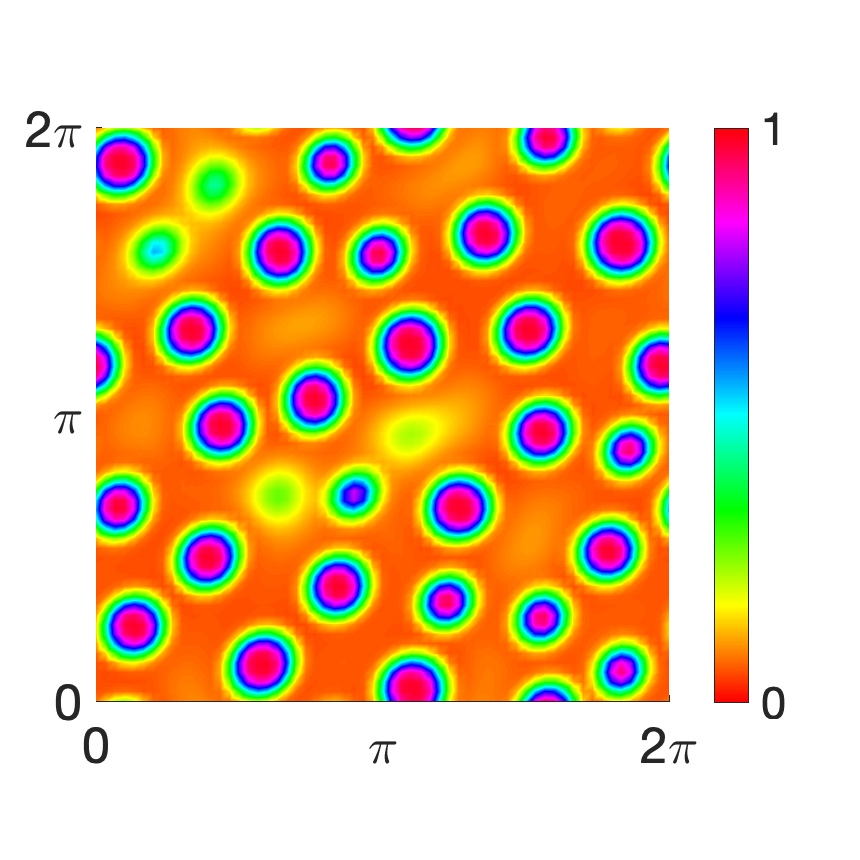}
	\includegraphics[width=52mm]{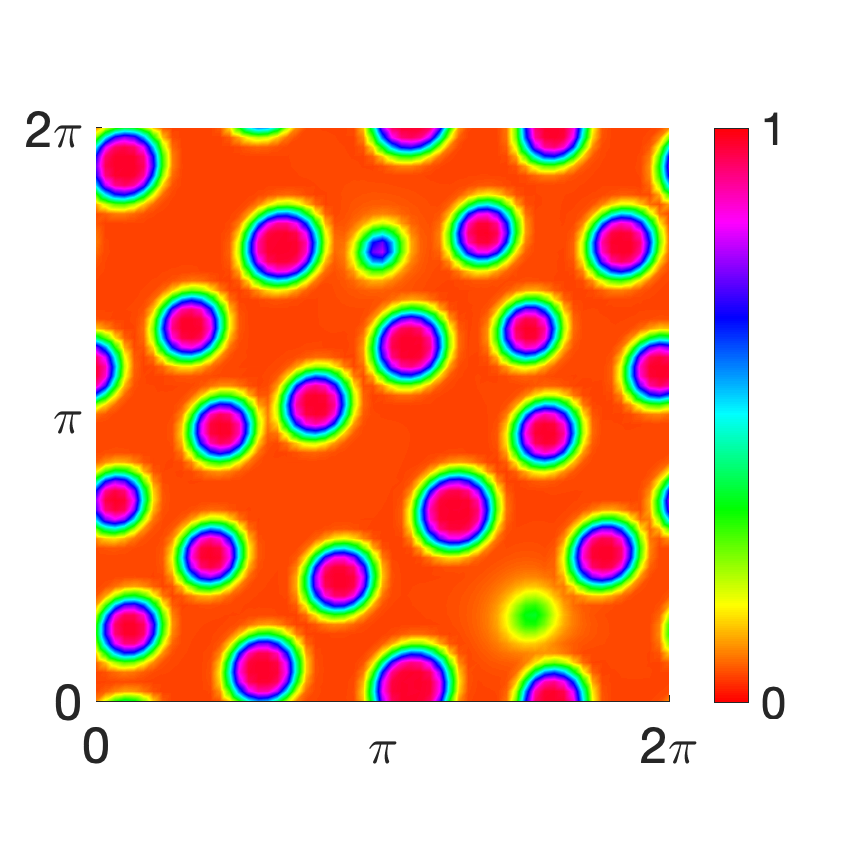}
	\caption{Fiber phase fraction $\phi$ at various times $t = 0.2,\, 0.4,\, 0.5,\,0.7,\,0.9,\,5$ with $\eps=0.15$.}
	\label{fig.phi}
	\end{figure}
	
	\begin{figure}[ht]
	\includegraphics[width=52mm]{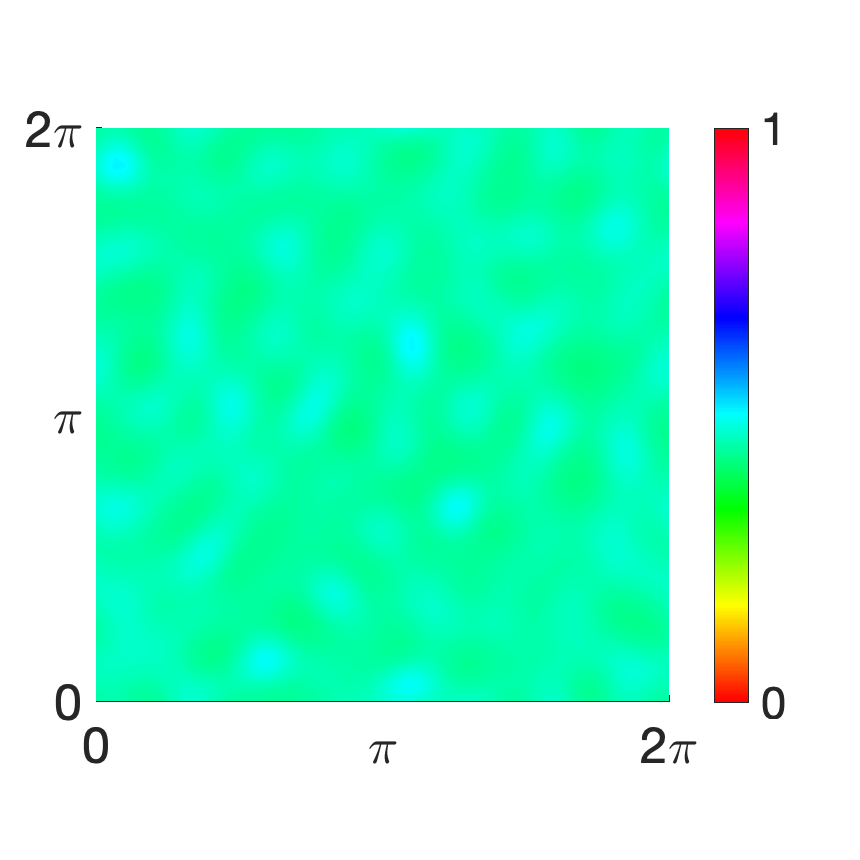}
	\includegraphics[width=52mm]{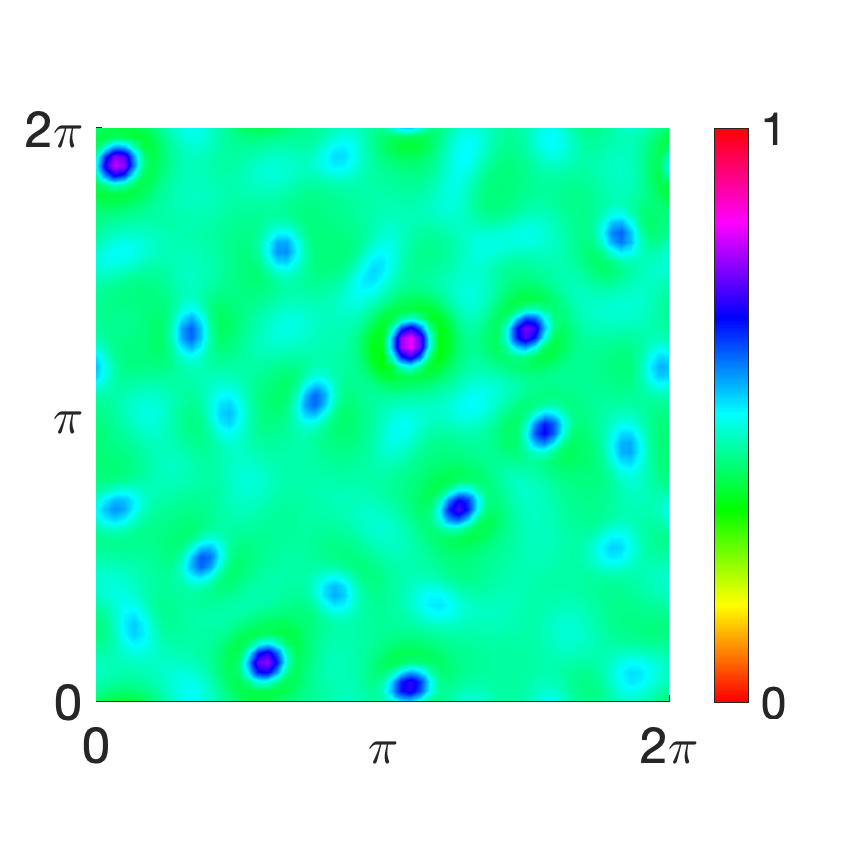}
	\includegraphics[width=52mm]{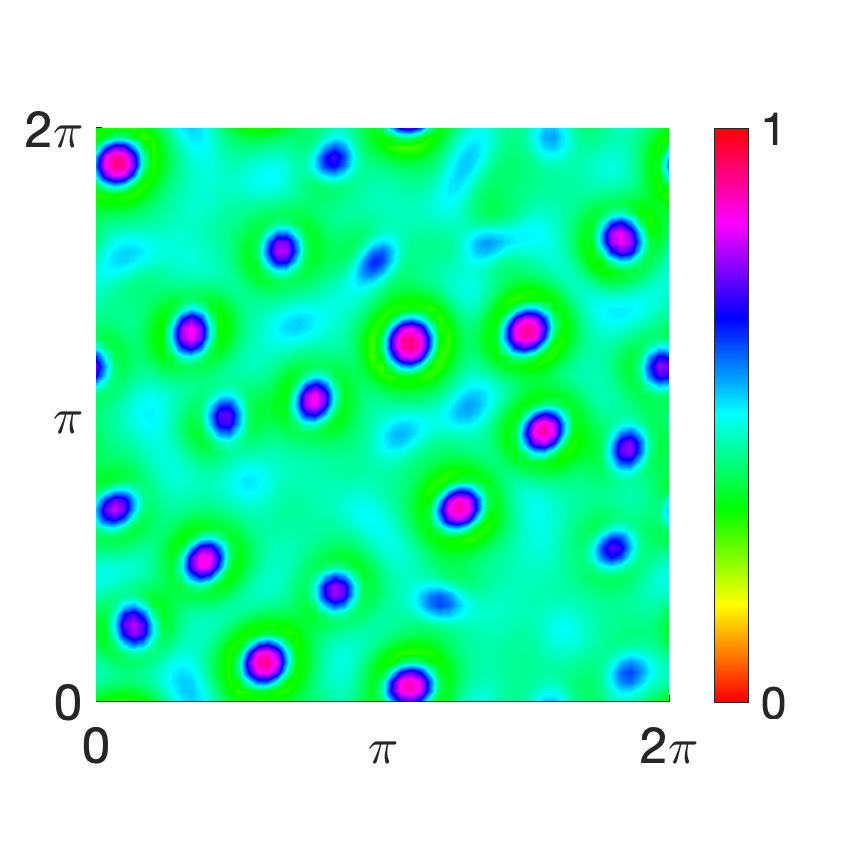}\\[-5mm]
	\includegraphics[width=52mm]{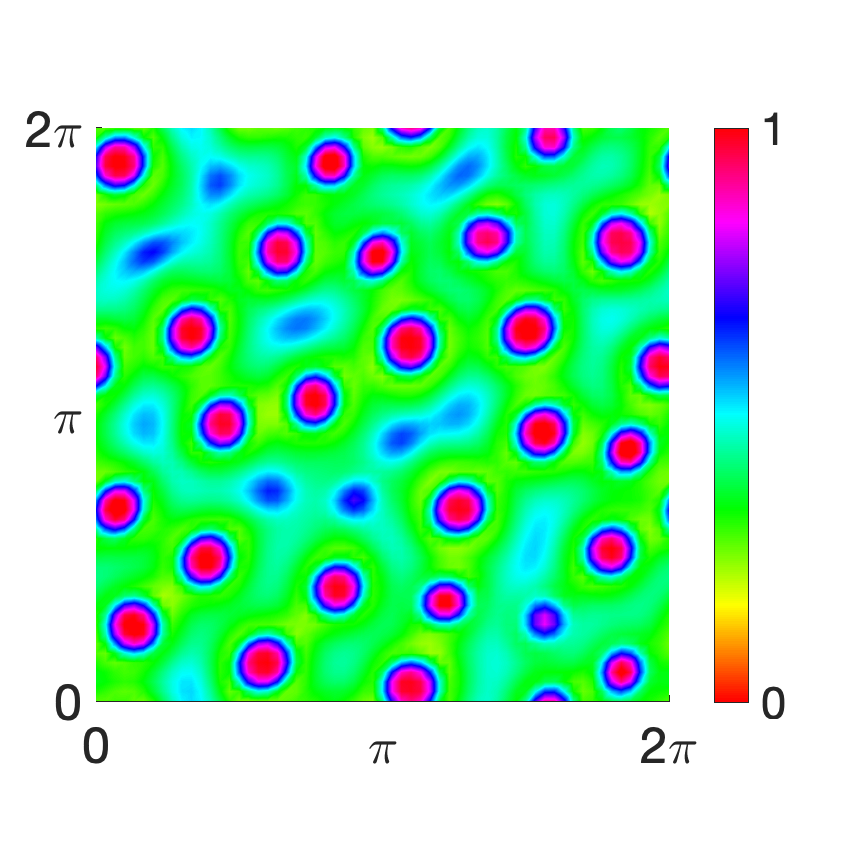}
	\includegraphics[width=52mm]{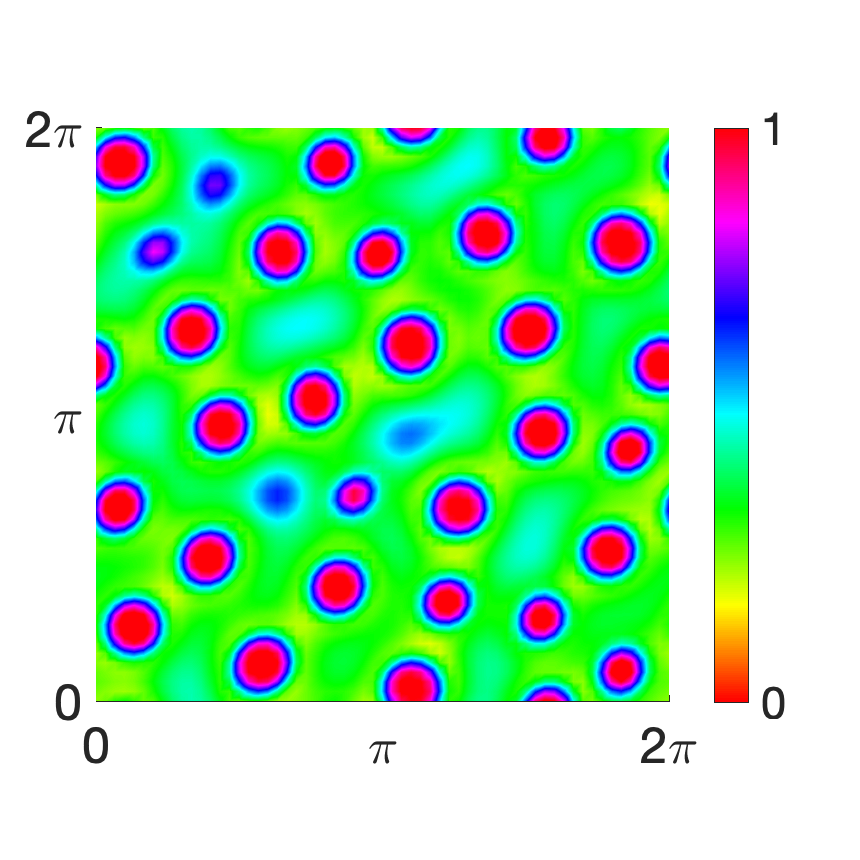}
	\includegraphics[width=52mm]{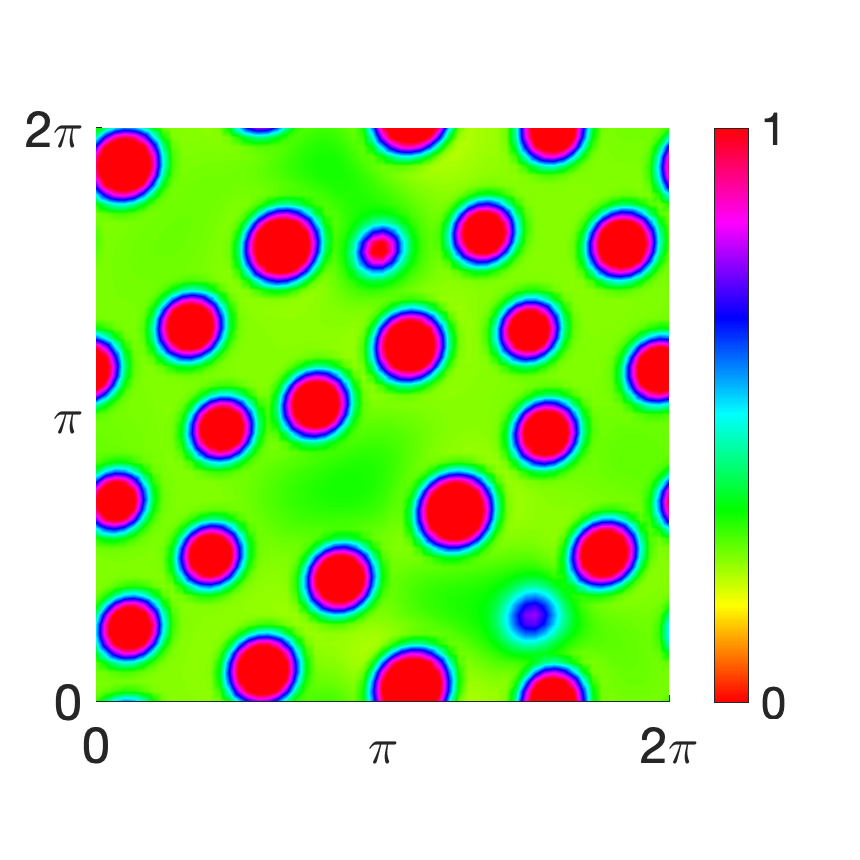}
	\caption{Nutrient concentration $c$ at various times $t = 0.2,\, 0.4,\, 0.5,\,0.7,\,0.9,\,5$ with $\eps=0.15$.}
	\label{fig.c}
\end{figure}

We have verified that the total nutrient mass is conserved, the discrete energy is decreasing, and the nutrient concentration is positive for the simulated times (not shown). This confirms the structure-preserving properties of the semi-convex-splitting scheme.

For comparison, we present simulations of phase separation when $\eps = 0.3$ is larger than in the previous example (the other parameters are unchanged). We show in Figure \ref{fig.phi.2} and \ref{fig.c.2} the snapshots of the solutions. Again we observe phase separation but the pattern coarsens. 

\begin{figure}[ht]
	\includegraphics[width=52mm]{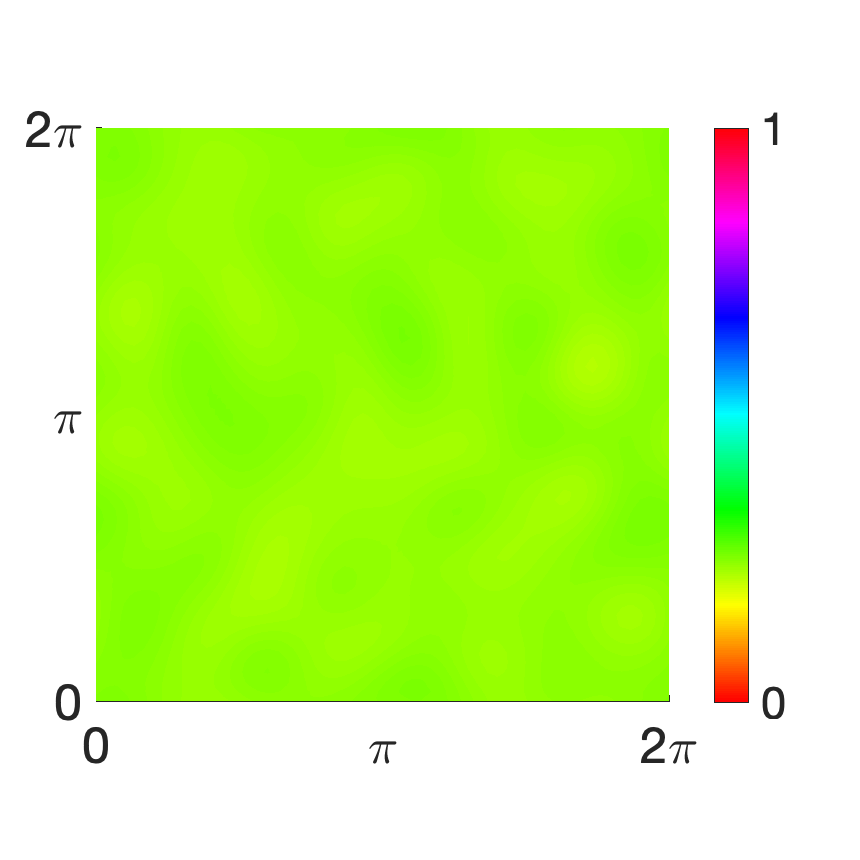}
	\includegraphics[width=52mm]{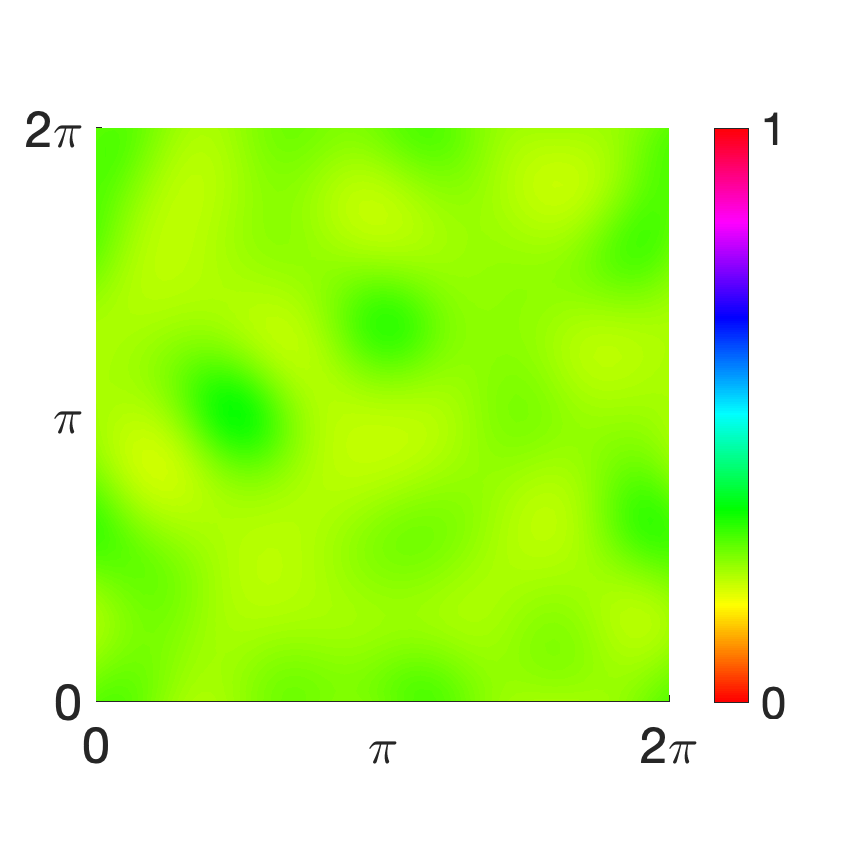}
	\includegraphics[width=52mm]{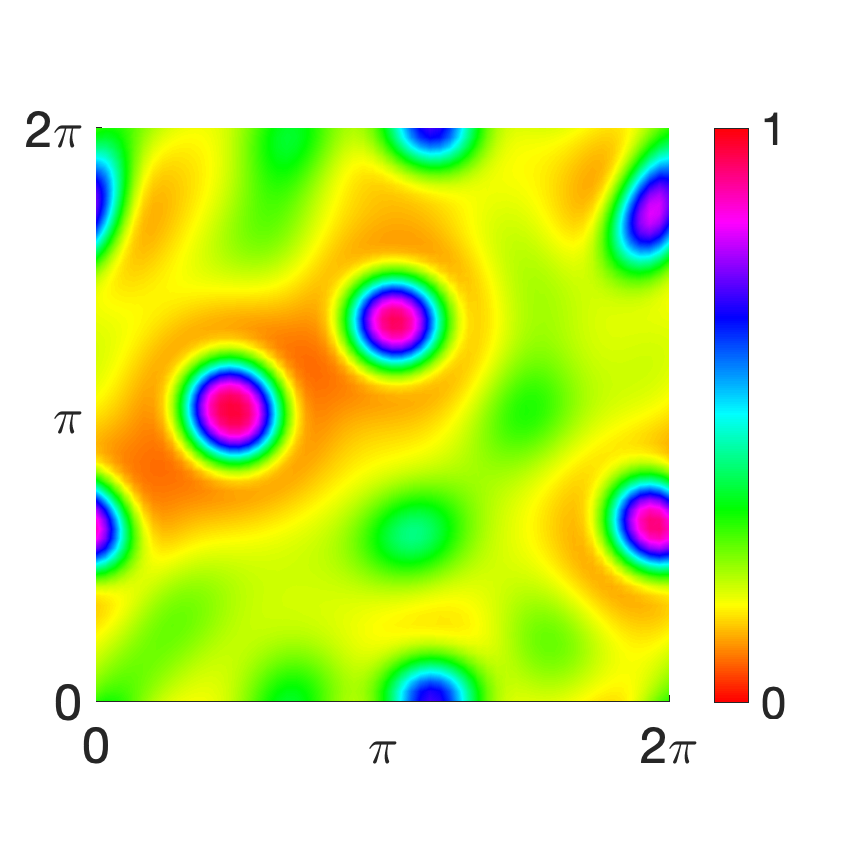}\\[-5mm]
	\includegraphics[width=52mm]{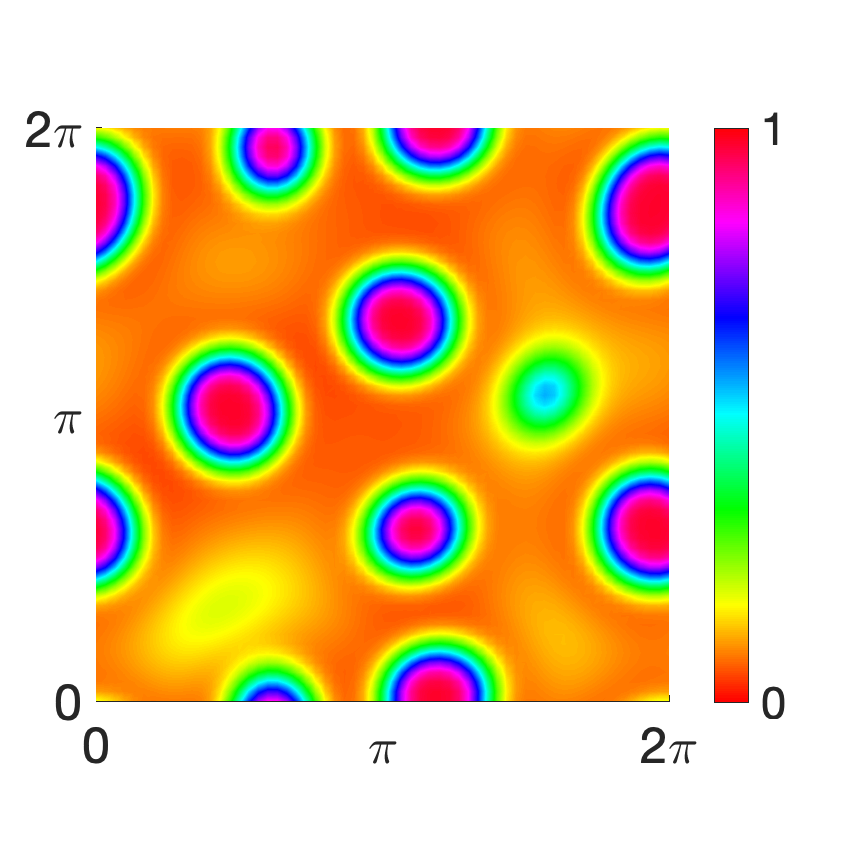}
	\includegraphics[width=52mm]{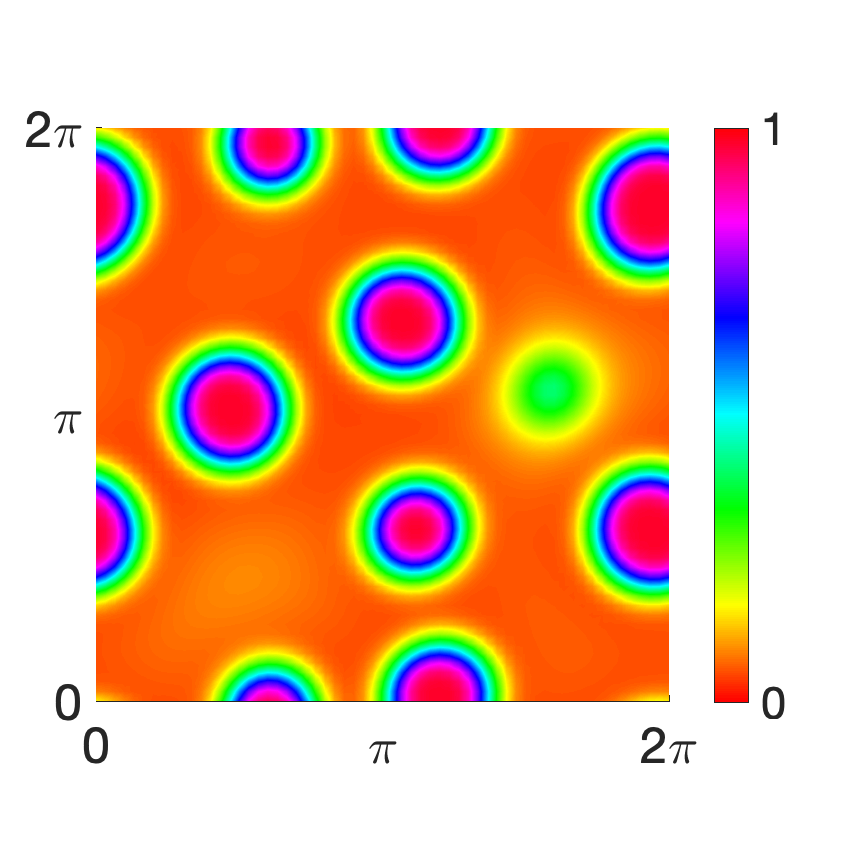}
	\includegraphics[width=52mm]{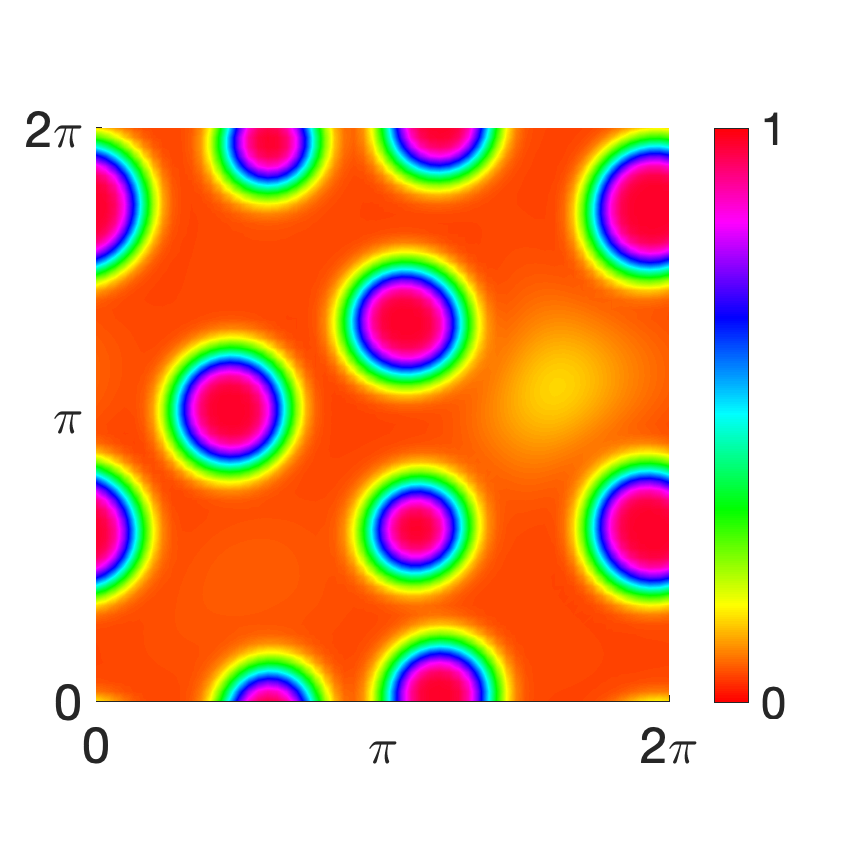}
	\caption{Fiber phase fraction $\phi$ at various times $t = 0.2,\, 2,\, 4,\,6,\,8,\,10$ with $\eps=0.3$.}
	\label{fig.phi.2}
\end{figure}
	
\begin{figure}[ht]
	\includegraphics[width=52mm]{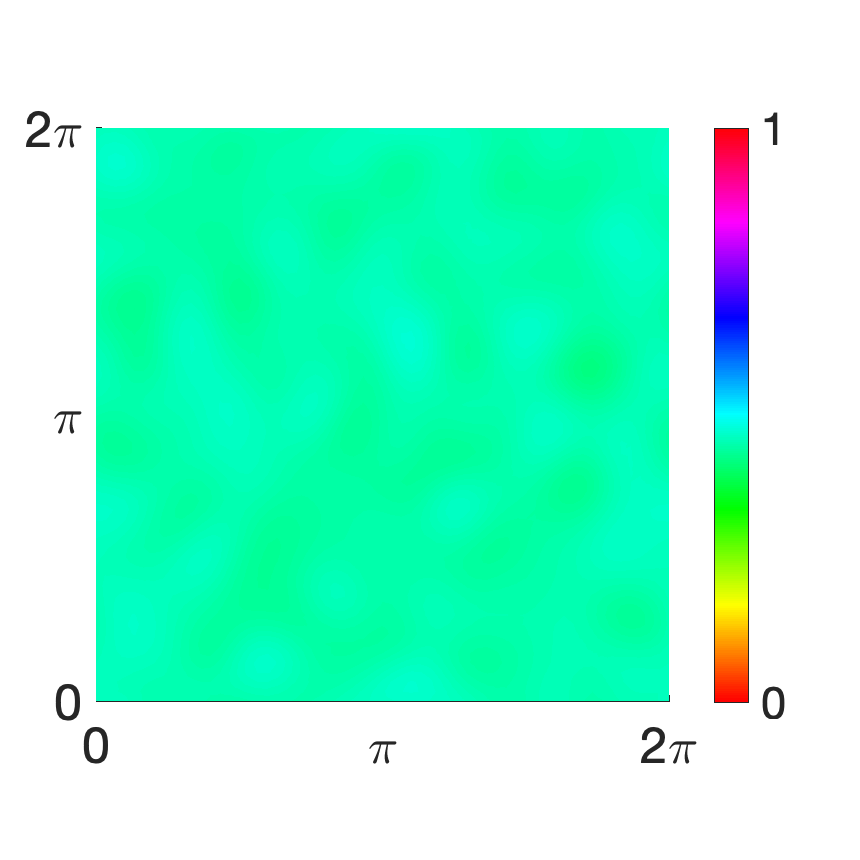}
	\includegraphics[width=52mm]{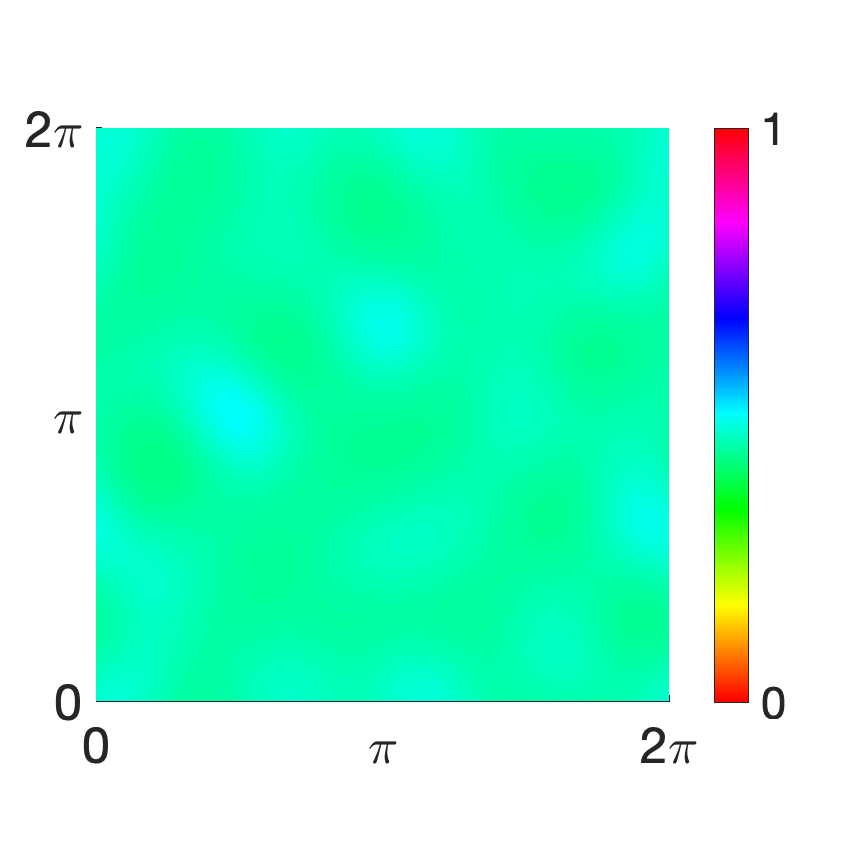}
	\includegraphics[width=52mm]{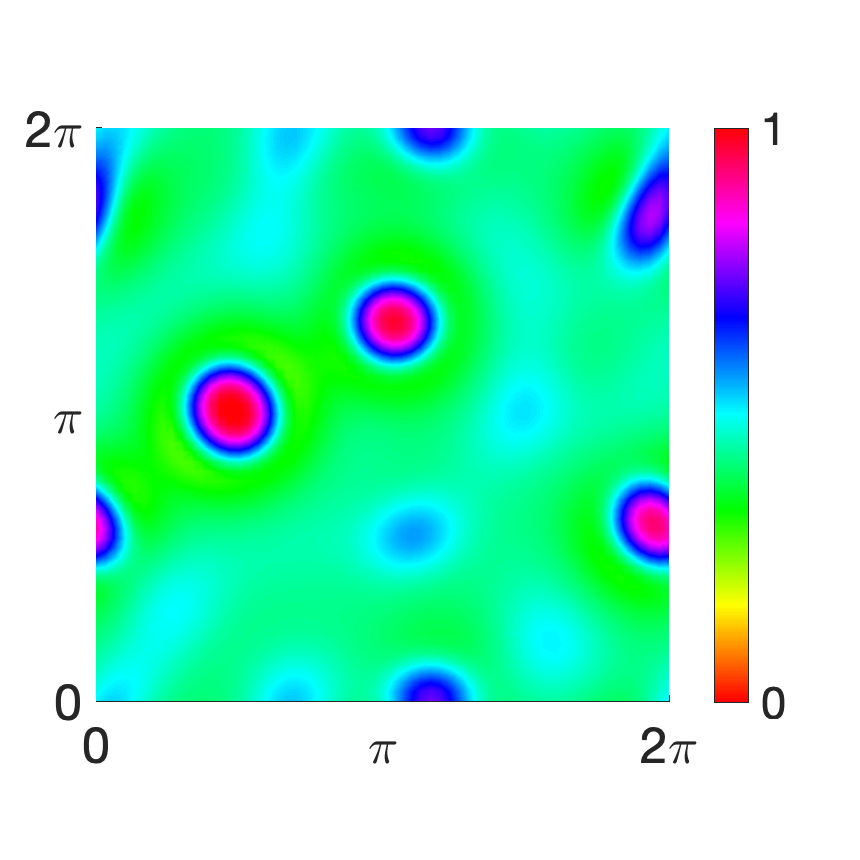}\\[-5mm]
	\includegraphics[width=52mm]{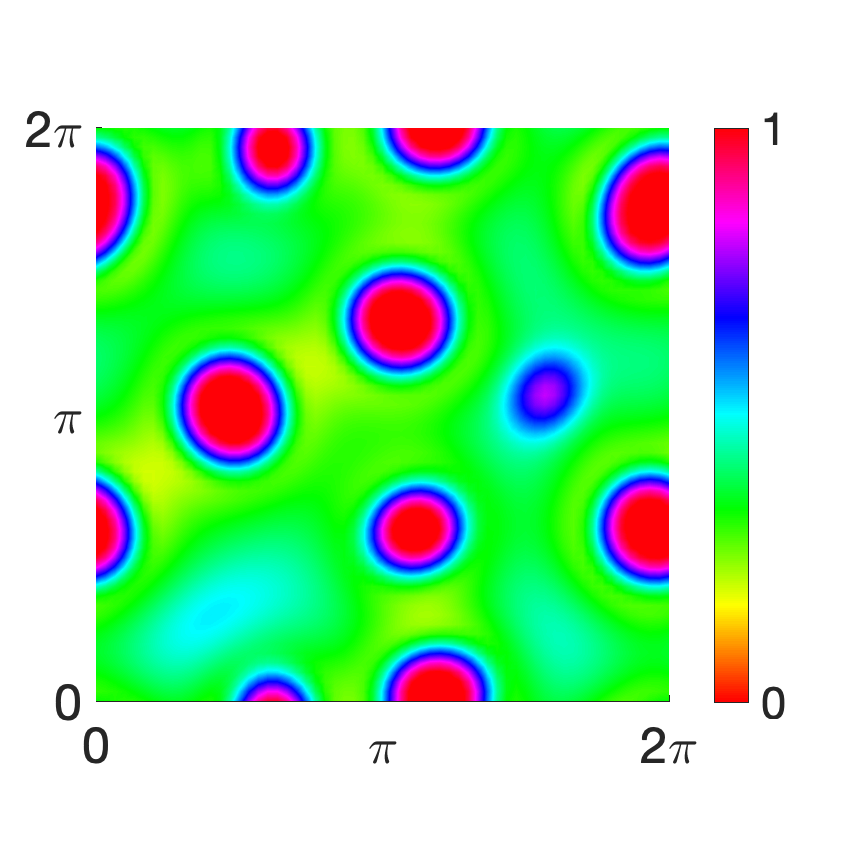}
	\includegraphics[width=52mm]{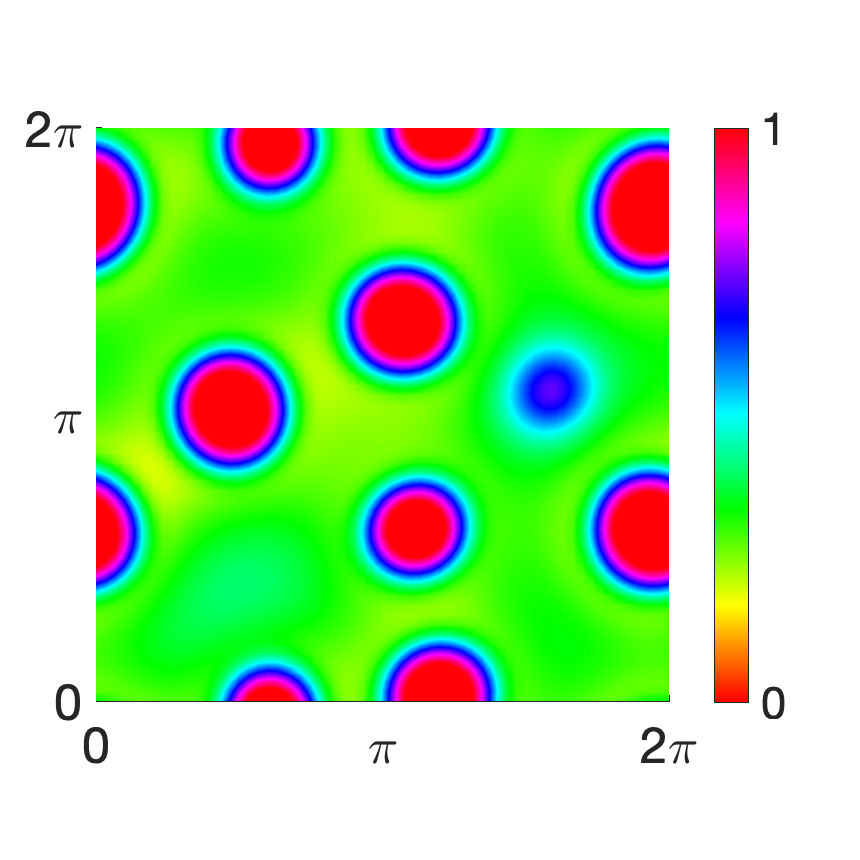}
	\includegraphics[width=52mm]{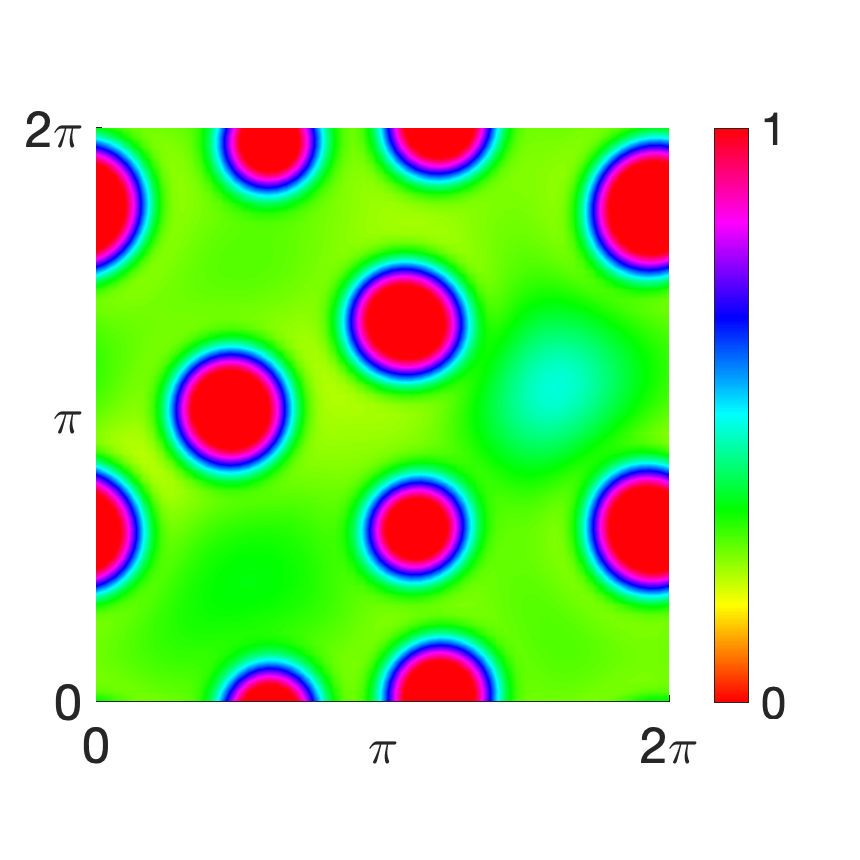}
	\caption{Nutrient concentration $c$ at various times $t = 0.2,\, 2,\, 4,\,6,\,8,\,10$ with $\eps=0.3$.}
	\label{fig.c.2}
\end{figure}

%%%%%%%%%%%%%%%%%%%%%%%%%%%%%%%%%%%%%%%%%%%%%%%%%%%%%%

\section{Conclusion}

{In this paper, a Cahn--Hilliard cross-diffusion system modeling the pre-patterning of lymphatic vessel morphology is considered, and a fully discrete finite-element method based on the idea of convex splitting is proposed. The existence of discrete solutions as well as structure-preserving properties of the scheme are proved, including energy stability, conservation of the solute mass, and preservation of lower and upper bounds of the fiber phase fraction. Numerical experiments performed in FreeFEM have confirmed the analytical results. We have verified the boundedness of the discrete concentration numerically. The scheme presents a robust approach for simulating complex biological processes involving cross-diffusion phenomena.} 

{As our scheme is of first order in time only, one may discuss the extension to higher-order approximations. A second-order BDF scheme has been proposed for a finite-difference approximation of the Cahn--Hillard equation. The difficulty is to extend this scheme to cross-diffusion systems. General one-leg multistep semi-discrete schemes have been presented in \cite{JuMi15}, and second-order convergence results have been shown. However, the proof only works for non-logarithmic (power-law type) energy functions and cannot be applied to Boltzmann-type entropies. The error analysis for higher-order temporal numerical schemes for cross-diffusion systems is delicate and a challenging work in the future.}

%%%%%%%%%%%%%%%%%%%%%%%%%%%%%%%%%%%%%%%%%%%%%%%%%%%%%%%%%%%%%%%%%%

\end{document}